\newtheorem{theorem}{Theorem}[section]
\newtheorem{lemma}[theorem]{Lemma}
\newtheorem{problem}[theorem]{Problem}
\newtheorem{corollary}[theorem]{Corollary}
\theoremstyle{definition}
\newtheorem{definition}[theorem]{Definition}
\theoremstyle{remark}
\numberwithin{equation}{section}
\newcommand{\bs}{\mathbf s}
\newcommand{\eps}{\varepsilon}
\def\Dio{{\rm Dio}}
\def\dio{{\rm dio}} 
\def\ice{{\rm ice}}
\def\rep{{\rm rep}}
\begin{document}

\title[On the expansions in two multiplicative dependent bases]{On the expansions of real 
numbers in two multiplicative dependent bases}  

\author{Yann Bugeaud}
\address{Department of Mathematics, Universit\'e de Strasbourg, 7 rue Ren\'e
Descartes, 67084 Strasbourg, France}
\email{bugeaud@math.unistra.fr}

\author{Dong Han Kim}
\address{Department of Mathematics Education,
Dongguk University -- Seoul, Seoul 04620, Korea.}
\email{kim2010@dongguk.edu}

\begin{abstract}
Let $r \ge 2$ and $s \ge 2$ be multiplicatively dependent integers.   
We establish a lower bound for the sum of the block complexities   
of the $r$-ary expansion and of the $s$-ary expansion 
of an irrational real number, viewed as infinite words on $\{0, 1, \ldots , r-1\}$
and $\{0, 1, \ldots , s-1\}$, and we show that this bound is best possible. 
\end{abstract}

\subjclass[2010]{11A63 (primary); 68R15 (secondary)}

\keywords{Combinatorics on words, Sturmian word, complexity, $b$-ary expansion}


\def\Dio{{\rm Dio}}
\def\dio{{\rm dio}} 
\def\ice{{\rm ice}}
\def\rep{{\rm rep}}
\def\Card{{\rm Card}}

\maketitle 

\section{Introduction}

Throughout this paper, $\lfloor x \rfloor$ denotes the greatest
integer less than or equal to $x$ 
and $\lceil x \rceil$
denotes the smallest integer greater than or equal to $x$.
Let $b \ge 2$ be an integer.
For a real number $\xi$, write
$$
\xi = \lfloor \xi \rfloor + \sum_{k \ge 1} \, {a_k \over b^k} = 
\lfloor \xi \rfloor + 0.a_1 a_2 \ldots ,
$$
where each digit $a_k$ is an integer from $\{0, 1, \ldots , b-1\}$ and
infinitely many digits $a_k$ are not equal to $b-1$.  
The sequence ${\bf a} := (a_k)_{k \ge 1}$ is uniquely determined by
the fractional part of $\xi$.
With a slight abuse of notation, we call it 
the $b$-ary expansion of $\xi$ and we view it also as the infinite word 
${\bf a} = a_1 a_2 \ldots $ over the alphabet $\{0, 1, \ldots , b-1\}$. 

For an infinite word ${\bf x} = x_1 x_2 \ldots $ over a finite alphabet 
and for a positive integer $n$, set
$$
p(n, {\bf x}) = \Card\{ x_{j+1} \ldots x_{j+n} : j \ge 0\}. 
$$
This notion from combinatorics on words is now commonly used to
measure the complexity of the $b$-ary expansion of a real number $\xi$. 
Indeed, for a positive integer $n$, we denote by 
$p(n, \xi, b)$ the total number of distinct blocks of $n$ digits
in the $b$-ary expansion ${\bf a}$ of $\xi$, that is,
$$
p(n, \xi, b) := p(n, {\bf a}) = \Card\{ a_{j+1} \ldots  a_{j+n} : j \ge 0\}. 
$$
Obviously, we have
$
1 \le p(n, \xi, b) \le b^n,
$
and both inequalities are sharp. 
If $\xi$ is rational, then its $b$-ary expansion is
ultimately periodic and the numbers $p(n, \xi, b)$, $n \ge 1$, 
are uniformly bounded by a constant depending only 
on $\xi$ and $b$. 
If $\xi$ is irrational, then, 
by a classical result of Morse and Hedlund \cite{MoHe},
we know that $p(n, \xi, b)\ge n+1$ for
every positive integer $n$, and this inequality is sharp.

\begin{definition} 
A Sturmian word $\mathbf x$ is an infinite word 
which satisfies
$$
p(n,{\mathbf x}) = n + 1, \quad \hbox{for $n \ge 1$}.
$$
A quasi-Sturmian word $\mathbf x$ is an infinite word 
which satisfies
$$
p(n,{\mathbf x}) = n + k, \quad \hbox{for $n \ge n_0$},
$$
for some positive integers $k$ and $n_0$. 
\end{definition}


The following rather general problem was investigated in \cite{Bu12}. 
Recall that two positive integers $x$ and $y$ are called 
{\it multiplicatively independent} if 
the only pair of integers $(m, n)$ such that
$x^m y^n = 1$ is the pair $(0, 0)$. 

\begin{problem}
Are there irrational real numbers having a `simple'
expansion in two multiplicatively independent bases?
\end{problem}



We established in \cite{BuKim15c} that  
the complexity function of the $r$-ary expansion of an 
irrational real number and that of its $s$-ary expansion cannot both grow too slowly when $r$ and $s$ are multiplicatively independent positive integers. 

\begin{theorem}[\cite{BuKim15c}]\label{twobases}
Let $r$ and $s$ be multiplicatively independent positive integers.
Any irrational real number $\xi$ satisfies
$$
\lim_{n \to + \infty} \, \bigl( p(n, \xi, r) + p(n, \xi , s) -  2n \bigr) = + \infty.
$$
Said differently, $\xi$ cannot have simultaneously a quasi-Sturmian 
$r$-ary expansion and a quasi-Sturmian $s$-ary expansion. 
\end{theorem}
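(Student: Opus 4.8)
The plan is to argue by contraposition. First, I would record the elementary fact that for an aperiodic infinite word $\mathbf x$ the Morse--Hedlund inequality $p(n+1,\mathbf x)\ge p(n,\mathbf x)+1$ makes $n\mapsto p(n,\mathbf x)-n$ non-decreasing. Since the $r$-ary expansion $\mathbf a$ and the $s$-ary expansion $\mathbf b$ of an irrational $\xi$ are aperiodic, the assumption $\liminf_{n}\bigl(p(n,\xi,r)+p(n,\xi,s)-2n\bigr)<+\infty$ forces both $p(n,\xi,r)-n$ and $p(n,\xi,s)-n$ to be bounded, i.e.\ $\mathbf a$ and $\mathbf b$ are both quasi-Sturmian. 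So it is enough to prove that an irrational $\xi$ cannot have simultaneously a quasi-Sturmian $r$-ary expansion and a quasi-Sturmian $s$-ary expansion when $r$ and $s$ are multiplicatively independent.

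Next I would exploit the rigid structure of quasi-Sturmian words. After removing a finite prefix, a quasi-Sturmian word is the image of a Sturmian word under a non-erasing morphism (equivalently, a coding of an irrational rotation by finitely many intervals whose endpoints lie in the module generated by $1$ and the rotation number). Applying this to $\mathbf a$ yields an irrational slope $\alpha_r$ and a morphism $\varphi_r$ with $\mathbf a=u_r\,\varphi_r(\mathbf s_r)$ for a Sturmian word $\mathbf s_r$ of slope $\alpha_r$, and similarly $\mathbf b=u_s\,\varphi_s(\mathbf s_s)$. From the continued fraction expansion of $\alpha_r$ and the standard words attached to its convergents I would extract arbitrarily long factors of $\mathbf a$ that are almost periodic, and, reading these against the $r$-ary expansion, obtain a sequence of rationals $p_k/q_k$ with $q_k\mid r^{m_k}(r^{\ell_k}-1)$ satisfying $|\xi-p_k/q_k|\ll q_k^{-\kappa_k}$ for explicit exponents $\kappa_k$; symmetrically, $\mathbf b$ would give approximations with denominators dividing $s^{m'_j}(s^{\ell'_j}-1)$. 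Thus $\xi$ is controlled simultaneously by the arithmetic of $r$ and by that of $s$.

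The crux is then to play these two families of approximations against each other, and this is where multiplicative independence of $r$ and $s$ must enter decisively: the renormalisation underlying the $r$-ary quasi-Sturmian structure is multiplication by powers of $r$, that underlying the $s$-ary structure multiplication by powers of $s$, and because $r^a\ne s^b$ for all positive integers $a,b$ these two scales are incommensurable. To turn this into a contradiction I would run a transcendence-theoretic argument on the two approximation sequences --- either through the Schmidt subspace theorem, or through a direct transversality/gap argument controlling the prime factorisations of the denominators $r^{m_k}(r^{\ell_k}-1)$ and $s^{m'_j}(s^{\ell'_j}-1)$ --- and conclude that $\xi$ is rational, contrary to hypothesis.

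The hard part, I expect, is that the Diophantine information coming from one base alone is weak: quasi-Sturmianity produces approximations of exponent only about $2$ (and merely $1$ for plain truncations), which in isolation rules out nothing. So the whole content of the theorem lies in genuinely combining the two bases and using the multiplicative independence, and in controlling the arithmetic of both denominator sequences at once. The most resistant case should be when the slopes $\alpha_r$ and $\alpha_s$ have bounded partial quotients, so that $\mathbf a$ and $\mathbf b$ are linearly recurrent and only repetitions of bounded exponent are available; there the crude Diophantine input is least effective and one must bring in the finer combinatorial rigidity of quasi-Sturmian words --- the precise return-word structure and the self-similar hierarchy of their factorisations --- alongside the arithmetic of $r$ and $s$.
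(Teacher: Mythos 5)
First, a point of reference: the paper you are working from does not prove Theorem~\ref{twobases} at all --- it is imported verbatim from the companion paper \cite{BuKim15c} --- so there is no in-text proof to compare against, and your attempt has to stand on its own. Your opening reduction is correct and standard: the expansions of an irrational number are aperiodic, Morse--Hedlund gives $p(n+1,\mathbf x)\ge p(n,\mathbf x)+1$, hence $n\mapsto p(n,\xi,r)+p(n,\xi,s)-2n$ is non-decreasing and its failure to tend to infinity forces both expansions to be quasi-Sturmian. The appeal to Cassaigne's characterisation (Lemma~\ref{Cas}) to write both expansions as $W\phi(\mathbf s)$ for Sturmian $\mathbf s$ is likewise legitimate.

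The genuine gap is everything after that: the third paragraph, which is where multiplicative independence must do all the work, is a declaration of intent rather than an argument. Concretely, (i) you never verify that the rational approximations extracted from the quasi-Sturmian structure have exponents large enough for the Subspace Theorem (or any Liouville-type argument) to bite; as you yourself concede in your final paragraph, when the slope has bounded partial quotients the repetition exponent is bounded and the approximations have exponent barely above $2$, from which neither family --- nor their mere juxtaposition --- yields rationality of $\xi$; (ii) the proposed control of the prime factorisations of $r^{m_k}(r^{\ell_k}-1)$ against $s^{m'_j}(s^{\ell'_j}-1)$ would require the exponents in the two families to be correlated, and nothing in your construction produces such correlated pairs; the Corvaja--Zannier and Luca bounds on $\gcd(r^u-1,s^v-1)$ apply to specific pairs $(u,v)$ that you have not exhibited. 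So the decisive step --- converting the incommensurability of the scales $r^{\mathbb Z}$ and $s^{\mathbb Z}$ into a contradiction --- is entirely missing, and the case you flag as ``most resistant'' (both expansions linearly recurrent) is in fact the generic one. As written, this is a plausible research programme, not a proof, and it cannot be credited as a proof of the theorem.
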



We complement Theorem~\ref{twobases} by the following statement 
addressing expansions of a real number in two multiplicatively dependent bases. 


\begin{theorem}\label{twobasesdepter}    
Let $r, s \ge 2$ be multiplicatively dependent integers and $m, \ell$ be the smallest 
positive integers such that $r^m = s^\ell$.
Then, there exist uncountably many real numbers $\xi$ satisfying  
$$
\lim_{n \to + \infty} \, \bigl( p(n, \xi, r) + p(n, \xi , s) -  2n \bigr) = m + \ell  
$$ 
and every irrational real number $\xi$ satisfies   
$$
\lim_{n \to + \infty} \, \bigl( p(n, \xi, r) + p(n, \xi , s) -  2n \bigr) \ge m + \ell.  
$$ 
\end{theorem}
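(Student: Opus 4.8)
The plan is to prove both assertions by reducing everything to a common base. Since $m,\ell$ are minimal with $r^m=s^\ell$ one has $\gcd(m,\ell)=1$, and comparing $p$-adic valuations (from $m\,v_p(r)=\ell\,v_p(s)$ and $\gcd(m,\ell)=1$ one gets $\ell\mid v_p(r)$) shows that the integer $c:=\prod_p p^{v_p(r)/\ell}\ge 2$ satisfies $r=c^\ell$ and $s=c^m$. Hence, writing $\mathbf g$ for the $c$-ary expansion of $\xi$ and $\mathbf d$ for its $t$-ary expansion ($t:=r^m=s^\ell=c^{m\ell}$), the $r$-ary expansion $\mathbf a$ is the $\ell$-blocking of $\mathbf g$ and the $m$-blocking of $\mathbf d$ (equivalently $\mathbf a=\mu_m(\mathbf d)$, where $\mu_m$ writes each $t$-ary digit in $m$ base-$r$ digits), while the $s$-ary expansion $\mathbf e=\mu_\ell(\mathbf d)$ is the $m$-blocking of $\mathbf g$. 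Throughout I will use the elementary relations between the complexity of an aperiodic word and that of an $h$-blocking: a length-$n$ factor of the $h$-blocking $\mathbf{x}^{[h]}$ of $\mathbf x$ lifts injectively to an aligned length-$hn$ factor of $\mathbf x$ (so $p(hn,\mathbf x)\ge p(n,\mathbf{x}^{[h]})$), while every length-$hn$ factor of $\mathbf x$ is determined by its phase modulo $h$ together with a length-$(n+1)$ factor of $\mathbf{x}^{[h]}$ whose first and last digits may be truncated inside a block. In particular $p(mn,\mathbf a)=\sum_{\phi=0}^{m-1}N_\phi$ and $p(\ell n,\mathbf e)=\sum_{\psi=0}^{\ell-1}M_\psi$, where $N_\phi$ (resp.\ $M_\psi$) counts the length-$mn$ factors of $\mathbf a$ of phase $\phi$ (resp.\ the length-$\ell n$ factors of $\mathbf e$ of phase $\psi$) -- these are the natural scales since both $mn$ and $\ell n$ correspond to $n$ consecutive digits of $\mathbf d$.

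For the construction of uncountably many extremal $\xi$, I take $\xi=\sum_{k\ge1}d_k t^{-k}$ with $\mathbf d=(d_k)_{k\ge1}$ an arbitrary Sturmian word on $\{0,1\}\subseteq\{0,\dots,t-1\}$; there are uncountably many such words, $\mathbf d$ is then the $t$-ary expansion of $\xi$, and $\xi$ is irrational. Since the digits of $\mathbf d$ lie in $\{0,1\}$, the morphism $\mu_m$ simply prepends $m-1$ zeros to each digit, so $\mathbf a=0^{m-1}d_1\,0^{m-1}d_2\cdots$ in base $r$ and $\mathbf e=0^{\ell-1}d_1\,0^{\ell-1}d_2\cdots$ in base $s$. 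For such words any in-block truncation of a digit still recovers it, from which a short computation gives that each of the $m$ phases contributes exactly $p(n,\mathbf d)=n+1$ distinct length-$mn$ factors to $\mathbf a$; thus $p(mn,\mathbf a)=mn+m$, and since the first differences of $p(\cdot,\mathbf a)$ are $\ge1$ this forces $p(n,\xi,r)=n+m$ for all large $n$. The identical argument gives $p(n,\xi,s)=n+\ell$ for all large $n$, so $p(n,\xi,r)+p(n,\xi,s)-2n\to m+\ell$, which proves the first assertion.

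For the lower bound, let $\xi$ be irrational and assume the limit is finite (otherwise there is nothing to prove). As $p(n,\xi,r)-n$ and $p(n,\xi,s)-n$ are non-decreasing and bounded below by $1$, there are integers $k_r,k_s\ge1$ with $p(n,\xi,r)=n+k_r$ and $p(n,\xi,s)=n+k_s$ for all large $n$ (both expansions quasi-Sturmian); it remains to prove $k_r+k_s\ge m+\ell$. Writing $p(mn,\mathbf a)+p(\ell n,\mathbf e)=(k_r+k_s)+(m+\ell)n$ for large $n$, I plan to bound the left side below by $(m+\ell)n+(m+\ell)$. The mechanism, and the reason $m+\ell$ (rather than something smaller) appears, is the coprimality of $m$ and $\ell$: among the $m$ phases of $\mathbf a$ and the $\ell$ phases of $\mathbf e$, the amount of a boundary $t$-ary digit that survives truncation, measured in base-$c$ digits, runs through the $m+\ell-1$ values $\{\,j\ell : 1\le j\le m\,\}\cup\{\,jm : 1\le j\le \ell\,\}$ (the two progressions meet only at $m\ell$, by $\gcd(m,\ell)=1$), the value $m\ell$ -- i.e.\ no truncation at all -- occurring twice, once as the aligned phase of $\mathbf a$ and once as that of $\mathbf e$. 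A Fine--Wilf / B\'ezout rigidity argument on these factors shows that distinct surviving amounts cannot accidentally coincide, so each of these $m+\ell$ phase--truncation configurations contributes, up to a bounded error, a full copy of the complexity of $\mathbf d$; adding the $m+\ell$ contributions and using $p(n,\mathbf d)\ge n+1$ (valid since $\xi$ is irrational) yields the bound, hence $k_r+k_s\ge m+\ell$.

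The main obstacle is precisely the last step: pinning down the contribution of each of the $m+\ell$ configurations, uniformly in $n$, rather than only a weaker total. The difficulty is that $\mathbf d$ need not be quasi-Sturmian -- indeed when $\mathbf a$ (or $\mathbf e$) is genuinely Sturmian the complexity of $\mathbf d$ grows linearly with slope $m$ (or $\ell$), and the morphic image $\mathbf a=\mu_m(\mathbf d)$ fails to be recognizable, so a factor of $\mathbf a$ may occur at several phases and the clean splitting $p(mn,\mathbf a)=\sum_\phi N_\phi$ must be replaced by an inclusion--exclusion. One then has to argue that whatever complexity is lost to such coincidences in one expansion is more than compensated in the other, the extreme case being $\mathbf a$ Sturmian with $\mathbf e$ correspondingly of complexity $n+m+\ell-1$. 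Carrying out this case analysis -- combining recognizability of morphic images, the left/right special-factor structure of quasi-Sturmian words, and the Fine--Wilf rigidity from $\gcd(m,\ell)=1$ -- is where the real combinatorial work lies.
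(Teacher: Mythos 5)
Your construction of the extremal numbers is essentially the paper's: you take a Sturmian word $\mathbf d$ on $\{0,1\}$ read in base $t=c^{m\ell}$ and count suffixes of $\phi_{m\ell,\ell}(A)0^k$ to get $p(n,\xi,r)=n+m$ and $p(n,\xi,s)=n+\ell$ eventually; that half is fine and matches Lemma~\ref{lem4.2} (second assertion) combined with the first paragraph of the paper's proof of Theorem~\ref{twobasesdepter}.

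The lower bound, however, has a genuine gap, and you say so yourself: the ``Fine--Wilf / B\'ezout rigidity argument'' that is supposed to show that the $m+\ell$ phase--truncation configurations contribute disjointly is asserted, not proved, and your closing paragraph concedes that this is ``where the real combinatorial work lies.'' Two concrete problems. First, your fallback of getting each configuration's contribution only ``up to a bounded error'' cannot suffice: since $p(mn,\mathbf a)-mn=k_r$ and $p(\ell n,\mathbf e)-\ell n=k_s$ are exact constants for large $n$, a bound of the form $(m+\ell)(n+1)-C$ only yields $k_r+k_s\ge m+\ell-C$; the count must be clean. Second, the obstacle you identify --- that $\mathbf d$ ``need not be quasi-Sturmian,'' so that $\mu_m$ and $\mu_\ell$ may fail to be recognizable --- is precisely the point the paper spends most of its effort on, and it resolves it in the opposite direction from what you fear: Theorem~\ref{twobasesdepbis} (second assertion) proves that if both the $r$-ary and $s$-ary expansions are quasi-Sturmian then the $b^{m\ell}$-ary expansion $\mathbf d$ \emph{is} quasi-Sturmian. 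That proof is not a routine phase count; it uses the standard words $M_k$ of the Sturmian slope, Lemma~\ref{lemnew} on convergents modulo $d$, and an analysis of right special factors to show that $|\phi(0)|$ and $|\phi(1)|$ are multiples of $\sigma$. Once $\mathbf d$ is known to be quasi-Sturmian, Lemma~\ref{qsrep} gives the recognizability you need, and Lemma~\ref{lem4.2} (first assertion) delivers $p(mn,\xi,r)\ge m(n+1)$ and $p(\ell n,\xi,s)\ge \ell(n+1)$ exactly. Without some substitute for Theorem~\ref{twobasesdepbis}, your inclusion--exclusion over phases cannot be closed, so the second assertion remains unproved in your write-up.
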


The next result, used in the proof of Theorem \ref{twobasesdepter}, has its own interest.

\begin{theorem}\label{twobasesdepbis}
Let $b \ge 2$ be an integer and $\rho, \sigma$ be positive integers.
If $\sigma$ divides $\rho$, then every real number whose $b^\rho$-ary expansion is
quasi-Sturmian has a quasi-Sturmian $b^\sigma$-ary expansion.  
Moreover, every real number whose $b^\rho$-ary and $b^\sigma$-ary expansions 
are both quasi-Sturmian has a quasi-Sturmian $b^\mu$-ary expansion, 
where $\mu$ is the least common multiple of $\rho$ and $\sigma$.    
\end{theorem}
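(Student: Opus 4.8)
The plan is to handle the two assertions separately. Both rest on viewing the base-$b^\sigma$ expansion as a uniform, letter-injective morphic image of the base-$b^\rho$ expansion, and on Cassaigne's description of quasi-Sturmian words: an infinite word over a finite alphabet is quasi-Sturmian if and only if it equals $V\phi(\mathbf s)$ for some finite word $V$, some Sturmian word $\mathbf s$ over $\{\ta,\tb\}$, and some morphism $\phi$ with $\phi(\ta)\phi(\tb)\neq\phi(\tb)\phi(\ta)$.

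For the first assertion, set $q=\rho/\sigma$. The base-$b^\sigma$ expansion $\mathbf c$ of $\xi$ is exactly $\tau(\mathbf a)$, where $\mathbf a$ is the base-$b^\rho$ expansion and $\tau$ sends each digit of $\{0,\dots,b^\rho-1\}$ to its length-$q$ base-$b^\sigma$ representation; this $\tau$ is $q$-uniform and injective on letters, and the conventions on non-maximal digits match, so the identity is exact. I would then prove the general statement that the image of a quasi-Sturmian word under a uniform, letter-injective morphism $\tau$ is quasi-Sturmian. The word $\tau(\mathbf a)$ is aperiodic ($\xi$ is irrational; or: an eventual period, taken to a multiple aligned with $\tau$-blocks, pulls back through injectivity of $\tau$ to an eventual period of $\mathbf a$), so by Morse and Hedlund $p(n+1,\tau(\mathbf a))-p(n,\tau(\mathbf a))\ge1$ for all $n$, i.e. $n\mapsto p(n,\tau(\mathbf a))-n$ is non-decreasing. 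A phase count then bounds it above: every factor of $\tau(\mathbf a)$ of length $n$, at any occurrence, is determined by its starting phase $t\in\{0,\dots,q-1\}$ modulo $q$ together with the factor of $\mathbf a$ it overlaps, of length $\lfloor(n-1+t)/q\rfloor+1$; summing over the $q$ phases and using $p(m,\mathbf a)=m+k_{\mathbf a}$ for $m$ large gives $p(n,\tau(\mathbf a))\le n+C$ for $n$ large. A non-decreasing, integer-valued, eventually bounded sequence is eventually constant, so $\tau(\mathbf a)$ is quasi-Sturmian. In particular $\mathbf c=\tau(\mathbf a)$ is quasi-Sturmian, which is the first assertion.

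For the second assertion I would first record a reformulation. Write $G_k$ for the passage from a base-$c$ expansion to the base-$c^k$ one (grouping digits in blocks of $k$) and $\iota_k$ for the inverse $k$-uniform ungrouping morphism. Then for any word $\mathbf y$ and any $k\ge1$, $G_k(\mathbf y)$ is quasi-Sturmian if and only if $\mathbf y=V\phi(\mathbf s)$ with $\mathbf s$ Sturmian, $\phi(\ta)\phi(\tb)\neq\phi(\tb)\phi(\ta)$, and $k$ dividing $|V|$, $|\phi(\ta)|$ and $|\phi(\tb)|$. For "if", the blocks of $V\phi(\mathbf s)$ are aligned with the $k$-grouping, so $G_k(\mathbf y)$ equals, up to a finite prefix, $(G_k\circ\phi)(\mathbf s)$, and $G_k\circ\phi$ has non-commuting images since $G_k$ is injective on words of length divisible by $k$; Cassaigne's criterion applies. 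For "only if", write $G_k(\mathbf y)=V'\phi'(\mathbf s)$ by Cassaigne's criterion and apply $\iota_k$: $\mathbf y=\iota_k(V')\cdot(\iota_k\circ\phi')(\mathbf s)$, with all relevant lengths multiples of $k$. Now let $\mathbf w$ be the base-$b$ expansion of $\xi$, so the base-$b^\rho$ and base-$b^\sigma$ expansions are $G_\rho(\mathbf w)$ and $G_\sigma(\mathbf w)$; the reformulation turns the hypotheses into representations $\mathbf w=V_1\phi_1(\mathbf s_1)=V_2\phi_2(\mathbf s_2)$ with $\mathbf s_i$ Sturmian, $\phi_i$ of non-commuting images, $\rho$ dividing all lengths in the first and $\sigma$ all lengths in the second. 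If I can produce a single representation $\mathbf w=V_3\phi_3(\mathbf s_3)$ of the same type with all lengths divisible by the least common multiple $\mu$ of $\rho$ and $\sigma$, the reformulation (in the "if" direction) shows $G_\mu(\mathbf w)$, i.e. the base-$b^\mu$ expansion, is quasi-Sturmian, as wanted.

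The remaining point, which I expect to be the main obstacle, is this "merging" statement: from $V_1\phi_1(\mathbf s_1)=V_2\phi_2(\mathbf s_2)$ with $\mathbf s_i$ Sturmian and $\phi_i$ of non-commuting images, obtain a Sturmian word $\mathbf s_3$ and morphisms $g_1,g_2$ of non-commuting images with $\mathbf s_i=W_ig_i(\mathbf s_3)$ (up to finite prefixes) and $\phi_1\circ g_1=\phi_2\circ g_2$. Granting it, set $\phi_3=\phi_1g_1=\phi_2g_2$; then $|\phi_3(\ta)|=|\phi_1(g_1(\ta))|$ is a sum of lengths of $\phi_1$-images, hence divisible by $\rho$, and symmetrically by $\sigma$, so by $\mu$, while the induced prefix has length divisible by both $\rho$ and $\sigma$, so by $\mu$ — the required representation. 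To establish the merging statement I would invoke recognizability: after composing $\phi_i$ with a deep enough initial segment of the S-adic expansion of $\mathbf s_i$ (which preserves the $k$-divisibility and the non-commutativity of images) one may assume $\phi_1,\phi_2$ recognizable, so the two block decompositions of $\mathbf w$ are locally readable from $\mathbf w$; the continued-fraction slopes of $\mathbf s_1$ and $\mathbf s_2$ are then each a tail of the slope naturally attached to $\mathbf w$ (read off from its right-special factors), hence one is a tail of the other, and $\mathbf s_3$ may be taken to be the deeper of the two; the identities $\mathbf s_i=W_ig_i(\mathbf s_3)$ and $\phi_1g_1=\phi_2g_2$ then follow, the finite prefixes being controlled using that $\mathbf w$ is aperiodic. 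Making the uniqueness of this "Sturmian core" and the prefix matching precise is the delicate part, and is where I expect the real work to lie.
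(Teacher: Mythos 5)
Your treatment of the first assertion is correct, and in fact a little more elementary than the paper's: where the paper simply composes the Cassaigne decomposition $\mathbf a = W\phi(\bs)$ with the digit-splitting morphism $\phi_{\rho,\sigma}$ and invokes Cassaigne's criterion again, you get the same conclusion from Morse--Hedlund plus a phase count over the $q=\rho/\sigma$ residues; both are valid. Your reformulation of quasi-Sturmianness of $G_k(\mathbf y)$ in terms of a decomposition $\mathbf y=V\phi(\bs)$ with $k$ dividing $|V|$, $|\phi(\ta)|$, $|\phi(\tb)|$ is also correct in both directions, and it reduces the second assertion cleanly to your ``merging statement.''

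But that merging statement is the entire content of the second assertion, and you have not proved it; you have only gestured at ``recognizability'' and ``S-adic expansions'' and yourself flagged that the uniqueness of the Sturmian core and the prefix matching are where the real work lies. Concretely: a quasi-Sturmian word does not come equipped with a canonical slope --- the Sturmian word $\bs$ in a representation $V\phi(\bs)$ is a priori only one of many possible choices, and showing that any two such representations of the same word $\mathbf w$ have slopes that are tails of one another, with morphisms satisfying the exact identity $\phi_1\circ g_1=\phi_2\circ g_2$ and with literally matching finite prefixes, is a genuine rigidity theorem that requires proof. The paper does not prove this general statement; it proves something more targeted and gets there by a quite different route: working with the decomposition $\mathbf a=W\phi(\bs)$ of the $b^\rho$-expansion, it shows directly that $\sigma$ must divide $|\phi(0)|$ and $|\phi(1)|$, by (i) an arithmetic lemma on the convergents $p_k,q_k$ of the slope modulo $\sigma$ (Lemma~\ref{lemnew}), which forces $|\phi(M_k)|\not\equiv 0\pmod\sigma$ for infinitely many standard words $M_k$, hence at least two occurrence-positions modulo $\sigma$ for every factor $\phi(U)$; and (ii) a recognizability statement proved from scratch (Lemma~\ref{qsrep}, via the periodicity Lemma~\ref{periodic} and Fine--Wilf), which converts those two residue classes into two distinct right special factors of every large length in the $b^\sigma$-expansion, contradicting its quasi-Sturmianness. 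It then still has to realign the prefix $W$ by passing to a derived Sturmian word $\mathbf t$ and a shifted morphism $\phi'$. None of this machinery is present in your proposal, so the second assertion remains unproved as written; to complete your route you would have to establish the merging lemma, which is of essentially the same difficulty as the paper's argument.
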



We conclude by an immediate consequence of Theorems \ref{twobases} 
and \ref{twobasesdepter}. 

\begin{corollary}   
Let $r, s \ge 2$ be distinct integers.  
No real number can have simultaneously a Sturmian $r$-ary expansion 
and a Sturmian $s$-ary expansion. 
\end{corollary}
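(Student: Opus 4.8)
The plan is to argue by contradiction and reduce immediately to Theorems~\ref{twobases} and~\ref{twobasesdepter}, treating the multiplicatively independent and the multiplicatively dependent cases separately. Suppose, for a contradiction, that some real number $\xi$ has simultaneously a Sturmian $r$-ary expansion and a Sturmian $s$-ary expansion. The first thing to record is that a Sturmian word, having complexity $n+1$ for every $n$, is not eventually periodic; hence $\xi$ is irrational, which is exactly the hypothesis needed to apply both theorems. Moreover, by the definition of a Sturmian word, $p(n, \xi, r) = p(n, \xi, s) = n+1$ for every $n \ge 1$, so
$$
p(n, \xi, r) + p(n, \xi, s) - 2n = 2 \qquad \text{for all } n \ge 1 .
$$

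Next I would split according to the arithmetic nature of the pair $(r,s)$. If $r$ and $s$ are multiplicatively independent, Theorem~\ref{twobases} asserts that the left-hand side above tends to $+\infty$, which is incompatible with it being identically equal to $2$. If instead $r$ and $s$ are multiplicatively dependent, let $m$ and $\ell$ be the smallest positive integers with $r^m = s^\ell$. Since $r \ne s$, we cannot have $m = \ell = 1$ (that would give $r = s$), so $m + \ell \ge 3$. Theorem~\ref{twobasesdepter} then yields
$$
\liminf_{n \to +\infty} \bigl( p(n, \xi, r) + p(n, \xi, s) - 2n \bigr) \ge m + \ell \ge 3 ,
$$
again contradicting the value $2$ computed above. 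As the two cases are exhaustive and both lead to a contradiction, no real number can have both a Sturmian $r$-ary expansion and a Sturmian $s$-ary expansion.

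I do not expect any genuine obstacle here: all the substance is already contained in Theorems~\ref{twobases} and~\ref{twobasesdepter}, and the derivation is a two-line case analysis. The only point meriting a word of care is the elementary remark that $r \ne s$ forces $m + \ell \ge 3$, since it is precisely this strict inequality that separates the quantity $p(n,\xi,r)+p(n,\xi,s)-2n$ for a hypothetical doubly Sturmian $\xi$ (which would be $2$) from the lower bound guaranteed in the dependent case.
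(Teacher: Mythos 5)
Your proof is correct and follows exactly the route the paper intends: the corollary is stated there as an immediate consequence of Theorems~\ref{twobases} and~\ref{twobasesdepter}, split into the multiplicatively independent and dependent cases, with the key observation that $r \ne s$ forces $m + \ell \ge 3$ in the dependent case. Nothing is missing.
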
 

Our paper is organized as follows. 
Section 2 gathers auxiliary results on Sturmian and quasi-Sturmian words.
Theorems \ref{twobasesdepter} and \ref{twobasesdepbis} are established in Section 4.


\section{Auxiliary results}

Here and below, for integers $i, j$ with $i \le j$, we write 
$x_i^j$ for the factor $x_i x_{i+1} \ldots x_j$ of $\mathbf x$.

We will make use of the following characterisation of quasi-Sturmian words.

\begin{lemma}\label{Cas}
An infinite word ${\mathbf x}$ written over a finite alphabet ${\mathcal A}$ 
is quasi-Sturmian if and only if there are a 
finite word $W$, a Sturmian word $\bs$ defined over $\{0, 1\}$ and a 
morphism $\phi$ from $\{0, 1\}^*$ into ${\mathcal A}^*$ such that 
$\phi (01) \not= \phi (10)$ and 
$$
{\mathbf x} = W \phi (\bs).
$$
\end{lemma}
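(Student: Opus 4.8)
Since the statement is an equivalence, I would establish the two implications separately; the common tool is a \emph{recognisability} property extracted from the hypothesis $\phi(01)\neq\phi(10)$. Note at the outset that, by the Fine--Wilf theorem, $\phi(01)=\phi(10)$ holds precisely when $\phi(0)$ and $\phi(1)$ are powers of one common word. Thus $\phi(01)\neq\phi(10)$ simultaneously forces $\phi(0),\phi(1)$ to be nonempty (so $\phi$ is nonerasing) and guarantees that $\{\phi(0),\phi(1)\}$ is a code (two words form a code if and only if they do not commute), so that $\phi$ is injective on $\{0,1\}^*$ and on $\{0,1\}^{\mathbb N}$.

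\emph{Sufficiency.} Let $\mathbf x=W\phi(\bs)$ with $\bs$ Sturmian and $\phi(01)\neq\phi(10)$. Prepending the finite word $W$ changes each $p(n,\cdot)$ only by a bounded amount, so I may assume $W$ is empty. The core is a bounded-delay recognisability statement for $\phi$ along $\bs$: there is an integer $D=D(\phi)$ so that in any occurrence in $\phi(\bs)$ of a factor of length $\ge D$ the boundaries between the images $\phi(s_i)$ of consecutive letters of $\bs$ are uniquely located, except possibly within distance $D$ of the two endpoints; were this to fail, one could produce for every $N$ a factor of $\phi(\bs)$ with two parsings differing by a bounded shift, and letting $N\to\infty$ would force $\phi(0),\phi(1)$ to share arbitrarily long common powers, contradicting $\phi(01)\neq\phi(10)$. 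Granting this, a length-$n$ factor of $\phi(\bs)$ is encoded, up to $O(1)$ boundary data, by the shortest factor $w$ of $\bs$ whose image contains it together with the offset of the occurrence inside $\phi(w_1)$; since $\bs$ is balanced the admissible values of $|w|$ lie in an interval of bounded length, which already gives $p(n,\phi(\bs))=O(n)$. To pin the slope to exactly $1$ I would instead work with special factors: recognisability identifies the right-special factors of $\phi(\bs)$ of large length, up to a bounded phase shift, with the $\phi$-images of the (unique, bi-extendable) right-special factors of $\bs$; hence $\phi(\bs)$ has exactly one right-special factor of each large length, with two right extensions, so $p(n+1,\phi(\bs))-p(n,\phi(\bs))=1$ for $n$ large and $\mathbf x$ is quasi-Sturmian.

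\emph{Necessity.} Assume $p(n,\mathbf x)=n+k$ for $n\ge n_0$, so $p(n+1,\mathbf x)-p(n,\mathbf x)=1$ for $n\ge n_0$. Counting factors of length $n+1$ by their length-$n$ prefixes shows there is a unique right-special factor $R_n$, with exactly two right extensions; counting by suffixes shows there is a unique left-special factor $L_n$, with two left extensions. Hence the Rauzy graph $G_n(\mathbf x)$ (vertices the factors of length $n$, edges the factors of length $n+1$) is connected with first Betti number $|E|-|V|+1=2$, has a unique vertex of out-degree $2$ (namely $R_n$), a unique vertex of in-degree $2$ (namely $L_n$), and all other vertices of in- and out-degree $1$; such a graph is necessarily a \emph{theta graph} --- two internally disjoint directed paths from $R_n$ to $L_n$ together with one directed path from $L_n$ to $R_n$ --- or, when $R_n=L_n$, a bouquet of two directed loops at $R_n$. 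Since $\mathbf x$ is a one-sided infinite walk in $G_n$ which, once it has reached $R_n$, is deterministically forced to return to $R_n$ again and again, I can read off a block decomposition $\mathbf x=W\phi(\bs)$: $W$ is the prefix of $\mathbf x$ up to and including the first occurrence of $R_n$, $\phi(i)$ ($i\in\{0,1\}$) is the word of new letters read along the $i$-th exit from $R_n$ and the forced return --- equivalently $\phi(0),\phi(1)$ are the two return words of the factor $R_n$ in $\mathbf x$ --- and $\bs\in\{0,1\}^{\mathbb N}$ records the successive choices. Here $\phi$ is nonerasing and $\phi(0)\neq\phi(1)$ (the two exit edges differ in their first new letter); if $\phi(01)=\phi(10)$ then $\phi(0),\phi(1)$ are powers of a common word and $\mathbf x$ is eventually periodic, contradicting $p(n,\mathbf x)\to\infty$, and likewise $\bs$ is aperiodic.

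It remains to arrange that $\bs$ be Sturmian, and this I expect to be the main obstacle. Because $R_n$ recurs in $\mathbf x$ exactly at the block boundaries of the decomposition, with bounded gaps, $\phi$ is automatically recognisable on $\bs$ in the sense above. If $\bs$ were not balanced it would, by the classical balance criterion for binary words, admit factors $0u0$ and $1u1$ for some word $u$; then $u$ is right-special in $\bs$, and applying $\phi$ to the block-aligned occurrences and using that $\{\phi(0),\phi(1)\}$ is a code (so that the two continuations $\phi(0\,\cdot)$ and $\phi(1\,\cdot)$ diverge within a bounded distance) would exhibit two distinct right-special factors of $\mathbf x$ of some common large length, contradicting the uniqueness established above. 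Thus $\bs$ is aperiodic and balanced; if it is outright Sturmian we are done, and if the construction yields $\bs$ only of the form $U\bs'$ with $U$ finite and $\bs'$ Sturmian, we replace $W$ by $W\phi(U)$ and $\bs$ by $\bs'$. I expect the genuinely technical heart of the whole argument to be this last passage --- the interplay between the special-factor count in $\mathbf x$, the balance of $\bs$, and the bounded-delay recognisability lemma that also drives the sufficiency direction.
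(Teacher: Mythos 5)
The paper does not prove this lemma at all: its ``proof'' is the single line ``See \cite{Cassa98}'', i.e.\ it is quoted from Cassaigne. You are therefore attempting something the authors deliberately outsource, and your architecture (bounded-delay recognisability plus special factors for sufficiency; Rauzy graphs, return words and a derived sequence for necessity) is indeed the standard skeleton of such a proof. But as written there is a genuine gap at exactly the point you yourself flag as the heart of the matter: the claim that the derived sequence $\bs$ is balanced. From the unbalance witness --- $0u0$ and $1u1$ both factors of $\bs$ --- you propose to exhibit two distinct right-special factors of $\mathbf x$ of a common large length. This does not follow. For the word (suffix of $\phi(0)$)$\,\phi(u)\,z$ to be right-special in $\mathbf x$ you would need $0u$ itself to be right-special in $\bs$, i.e.\ you would need $0u1$ (or symmetrically $1u0$) to be a factor as well, and the unbalance hypothesis gives you neither; it is perfectly consistent with $0u$ being always followed by $0$ and $1u$ always followed by $1$, in which case each of your two candidate words has a unique right extension. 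Moreover, even granting two block-aligned right-special words, they sit at lengths $t+|\phi(u)|+|z|$ with different admissible $t$'s, and since $|\phi(0)|\ne|\phi(1)|$ in general you still owe an alignment argument to land them at the \emph{same} length. The step that actually closes this in the literature is different: either a direct complexity transfer showing $p(m,\bs)\le m+k'$ with $k'$ controlled by the quasi-Sturmian constant of $\mathbf x$, followed by iteration of the derivation until the constant drops to $1$, or an analysis of how the Rauzy graphs $G_n$ evolve with $n$ (which reproduces the Sturmian directive sequence). You supply neither.

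Two secondary points. First, your sufficiency direction rests entirely on a ``bounded-delay recognisability'' statement for $\phi$ along $\bs$ which you justify only by a one-line heuristic (``two parsings would force common powers''); making that precise is essentially the content of the paper's Lemma~\ref{qsrep}, whose proof occupies a full page of case analysis, so this cannot be waved through. Second, minor but worth fixing: the equivalence $\phi(01)=\phi(10)\iff\phi(0),\phi(1)$ are powers of a common word is the Lyndon--Sch\"utzenberger commutation theorem (Theorem~1.5.3 of \cite{AlSh03}, as the paper itself uses elsewhere), not Fine--Wilf, and your Rauzy graph need not literally be a theta graph because prefixes of $\mathbf x$ that never reoccur create vertices of in-degree $0$; your device of absorbing everything before the first return to $R_n$ into $W$ handles this, but the classification of the graph should be stated for the recurrent part only.
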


\begin{proof}
See \cite{Cassa98}.
\end{proof}

Throughout this paper, for a finite word $W$ and an integer $t$, we write $W^t$ 
for the concatenation of $t$ copies of $W$ and $W^{\infty}$ 
for the concatenation of infinitely many copies of $W$. 
We denote by $|W|$ the length of $W$, that is, the number of letters composing $W$. 
A word $U$ is called periodic if $U = W^t$ for some finite word $W$ and an integer $t \ge 2$.
If $U$ is periodic, then the period of $U$ is defined as the length of the shortest 
word $W$ for which there exists an integer $t \ge 2$ such that $U = W^t$. 

\begin{lemma}\label{periodic}  
Let $U$ be a finite word. Assume that there exist words $U_1, U_2, V, W$ such that  
$U = U_1 U_2$ and  $U U = V U_2 U_1 W$, 
with $|U_1| \ne |V|$ and $0 < |V| < |U|$.
Then, the word $U$ is periodic.  
\end{lemma}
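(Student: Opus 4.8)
The plan is to exploit the fundamental periodicity tool of combinatorics on words — the Fine--Wilf theorem — after carefully unwinding what the hypothesis $UU = V U_2 U_1 W$ tells us about overlaps of $U$ with itself. Write $n = |U|$, so $|U_1| + |U_2| = n$, and set $v = |V|$; the hypotheses give $0 < v < n$ and $v \ne |U_1|$. The equation $UU = V U_2 U_1 W$ identifies, inside the doubled word $UU$ of length $2n$, the factor $U_2 U_1$ (which is the conjugate/rotation of $U = U_1 U_2$ by $|U_1|$ letters) sitting at position $v+1$ through $v + n$. Meanwhile $U_2 U_1$ also occurs in $UU$ in the obvious way: since $U_2$ is a suffix of $U$ and $U_1$ is a prefix of $U$, the factor $U_2 U_1$ occurs in $UU$ starting at position $|U_1| + 1$ (namely $U = U_1 U_2$, then $U = U_1 U_2$ again, so positions $|U_1|+1,\dots,|U_1|+n$ spell $U_2 U_1$). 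Thus the word $U_2 U_1$ of length $n$ occurs in $UU$ at two different starting positions, $|U_1|+1$ and $v+1$, and these differ by $d := |v - |U_1|| \ne 0$ with $0 < d < n$.

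**Extracting a period.** Two occurrences of a length-$n$ word at positions differing by $d$ inside a longer word force, on their union, a periodicity of period $d$ — but here I must be a little careful, because the two occurrences of $U_2 U_1$ in $UU$ might together exceed the available $2n$ letters. The cleaner route: the relation $UU = V U_2 U_1 W$ shows that $U_2 U_1$ is a factor of $UU$, hence $U_2 U_1$ has the same set of factors of each length as rotations of $U$ do; more directly, comparing $UU = (U_1 U_2)(U_1 U_2)$ with $UU = V (U_2 U_1) W$ and reading off the middle block $U_2 U_1$ at offset $v$ gives that the infinite word $U^\infty$ agrees with its shift by $d$ on a long enough stretch. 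Concretely, $U^\infty$ has period $n$ by construction, and the two occurrences of $U_2 U_1$ give that a block of length at least $n$ in $U^\infty$ also has period $d$. Since $n = |U| \ge |U_2 U_1| $ and $d < n$, the overlap has length $\ge n - d \ge$ (enough), so by Fine--Wilf, provided the window has length at least $n + d - \gcd(n,d)$, we get that $U$ has period $\gcd(n,d) \le d < n$, whence $U$ is periodic.

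**The main obstacle.** The delicate point — and where I expect to spend the real effort — is verifying that the overlap window on which both period $n$ and period $d$ hold is genuinely long enough to invoke Fine--Wilf, i.e.\ at least $n + d - \gcd(n,d)$, rather than just "at least $n$". This is exactly why the paper includes the side conditions $0 < |V| < |U|$ and $|U_1| \ne |V|$: they prevent the two occurrences of $U_2 U_1$ from coinciding ($d \ne 0$) and keep everything inside a controlled portion of $UU$ (length $2n$), but one still has to chase the inequalities. If the direct Fine--Wilf window is too short, the fallback is to bootstrap: the occurrence of $U_2 U_1$ at offset $v$, being a cyclic rotation, lets one propagate the $d$-periodicity further along $U^\infty$ by re-reading the same relation shifted by multiples of $n$, eventually covering a window of any desired length and then applying Fine--Wilf (or simply concluding directly that $U^\infty$ has period $d$, hence period $\gcd(n,d)$, hence $U$ is a power of a word of that length). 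Once the short period $\gcd(n,d) < n$ divides $n$ and $U^\infty$ is $\gcd(n,d)$-periodic, $U$ equals a power of its length-$\gcd(n,d)$ prefix with exponent $n/\gcd(n,d) \ge 2$, which is the definition of periodic given above, completing the proof.
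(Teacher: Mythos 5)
Your argument is correct, but it follows a genuinely different route from the paper's. The paper's proof is algebraic: since $|V|+|W|=|U|$, it first gets $U=VW$ and hence $U_2U_1=WV$; then (say when $|U_1|<|V|$, writing $V=V'U_1$ with $V'$ nonempty) everything collapses to a commutation relation $ZV'=V'Z$ with $Z=U_1W$ nonempty, and the Lyndon--Sch\"utzenberger commutation theorem (Theorem 1.5.3 of Allouche--Shallit) immediately gives that $U=ZV'$ is a proper power. You instead read the hypothesis geometrically: the conjugate $U_2U_1$ occurs in $UU$ at positions $|U_1|$ and $|V|$, at distance $d=\bigl|\,|V|-|U_1|\,\bigr|$ with $0<d<n:=|U|$, and you aim for Fine--Wilf. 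The ``main obstacle'' you flag actually dissolves on a one-line length count, so no bootstrapping is needed: the two occurrences, each of length $n$, give $x_i=x_{i+d}$ for $n$ consecutive indices $i$ in $U^\infty$, i.e.\ a window of length $n+d$ with period $d$; that window also has period $n$, and $n+d\ge n+d-\gcd(n,d)$, so Fine--Wilf applies directly and yields period $g:=\gcd(n,d)$ with $g\mid n$ and $g<n$. Since the window contains a full period of the $n$-periodic word $U^\infty$, the whole of $U^\infty$ is $g$-periodic, so $U$ is the $(n/g)$-th power of its length-$g$ prefix with $n/g\ge 2$, as required. You should replace the hedged discussion and the ``fallback'' by this explicit count. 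The two proofs are of comparable length and both rest on standard tools (commutation theorem versus Fine--Wilf, which are essentially equivalent in strength); yours makes the overlap structure of $UU$ explicit, while the paper's avoids any window-length bookkeeping.
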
  

\begin{proof}
Since $V$ is a prefix of $U$ and $W$ is a suffix of $U$,    
we get 
$$
U = U_1 U_2 = VW,
$$    
thus, $VU_2U_1W = U U = VWVW$.   
This implies 
$$
U_2U_1 =  WV.
$$    
If $|U_1| < |V|$, then we can write $V = V' U_1$ for a nonempty word $V'$,
thus $U_2 = W V'$.
Therefore,
$$
U_1 W V' = U_1 U_2 = VW = V' U_1 W.
$$
Our assumption $0 < |V| < |U|$ implies that the word $Z:= U_1 W$ is nonempty. 
Since $Z V' = V' Z$, it follows from  
Theorem~1.5.3 of \cite{AlSh03} that $U = Z V' $ is periodic. 
The proof of the case $|U_1| > |V|$ is similar.    
\end{proof}

\begin{lemma}\label{qsrep}
Let ${\mathcal A}$ be a finite set, 
$\bs$ a Sturmian word over $\{0, 1\}$, and $\phi$ a morphism  
from $\{0, 1\}^*$ into ${\mathcal A}^*$ satisfying $\phi(01) \ne \phi(10)$.  
Then there exists an integer $n_0$ such that, for any factor $A$ of $\bs$ 
of length greater than $n_0$, 
if one can write $\phi(A)$ as $V_1 \phi(b_2 b_3 \dots b_{m-1} ) V_2$, where   
$B = b_1b_2  \dots b_{m-1} b_m$ is a factor of $\bs$, the word   
$V_1$ is a nonempty suffix of $\phi(b_1)$, and $V_2$   
is a nonempty prefix of $\phi(b_m)$,  
then $V_1 = \phi(b_1), V_2 = \phi(b_m)$   and $A = B$.    
\end{lemma}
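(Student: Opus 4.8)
The plan is to exploit the fact that a Sturmian word has exactly $n+1$ factors of length $n$, together with Lemma \ref{periodic}, which forces periodicity — something Sturmian words do not tolerate in long factors. First I would observe that, since $\phi(01) \ne \phi(10)$ and $\bs$ is Sturmian (hence contains both $01$ and $10$ as factors, $\bs$ not being eventually periodic), the morphism $\phi$ is injective on factors of $\bs$ of any fixed length; more precisely, there is an $n_1$ such that if $A, A'$ are factors of $\bs$ of the same length $\ge n_1$ with $\phi(A) = \phi(A')$, then $A = A'$. One also needs that $|\phi(b)| \ge 1$ for the letters $b$ actually appearing in $\bs$ (if some letter has empty image, a short separate argument reduces to a Sturmian word on the remaining letter, which is impossible, or trivialises the statement), so that $|\phi(A)|$ grows linearly in $|A|$ and comparisons of lengths are meaningful.

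Next, suppose $\phi(A) = V_1\, \phi(b_2 b_3 \cdots b_{m-1})\, V_2$ with the stated conditions, where $A = a_1 a_2 \cdots a_k$ and $B = b_1 b_2 \cdots b_m$ are factors of $\bs$. Writing $\phi(A) = \phi(a_1)\phi(a_2)\cdots\phi(a_k)$, I would compare the two factorisations of the same word letter-block by letter-block from the left. Because $V_1$ is a nonempty proper-or-full suffix of $\phi(b_1)$, either the two parsings are "in phase" — in which case an easy induction along the blocks gives $V_1 = \phi(b_1)$, then $a_i = b_i$ for all $i$, and finally $V_2 = \phi(b_m)$, $A = B$, as desired — or they are "out of phase", meaning $V_1$ is a strict nonempty suffix of $\phi(b_1)$. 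In the out-of-phase case, $\phi(b_1) = X V_1$ with $X$ nonempty, so the word $\phi(b_1 b_2 \cdots b_m) = X V_1 \phi(b_2 \cdots b_{m-1}) V_2 \cdot (\text{tail of } \phi(b_m))$ contains $\phi(A)$ shifted by $|X|$ from its natural block boundary. This is precisely the configuration to which I want to apply Lemma \ref{periodic}: the two different parsings of a common long word by the blocks $\phi(c)$, $c \in \{0,1\}$, with a shift, produce an identity of the shape $U U = V \,(\text{rotation of } U)\, W$ on a suitable factor $U$ built from $\phi$-images, with the offset $|X|$ playing the role that makes $|U_1| \ne |V|$.

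The main obstacle — and where the bulk of the work lies — is setting up that application of Lemma \ref{periodic} cleanly: one must choose the right finite sub-factor on which to read off the relation $UU = VU_2U_1 W$, verify the length constraints $|U_1| \ne |V|$ and $0 < |V| < |U|$ (this is where the strict, nonempty nature of $V_1$ as a suffix of $\phi(b_1)$, and the hypothesis $\phi(01)\ne\phi(10)$, are used to rule out the degenerate offsets), and then translate the resulting periodicity of $U$ back into periodicity of a long factor of $\bs$ — contradicting that $\bs$ is Sturmian, hence aperiodic, once $n_0$ is chosen large enough. A mild subtlety is that periodicity of a $\phi$-image need not be immediately visible as periodicity of the $\bs$-factor; here I would use that $\phi$ restricted to long factors is injective (the $n_1$ above) so that a period of $\phi$(some factor) of bounded length pulls back to an eventual period of that factor of $\bs$, which for a Sturmian word is impossible on factors longer than a computable threshold. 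Taking $n_0$ to be the maximum of $n_1$ and of all these thresholds finishes the proof.
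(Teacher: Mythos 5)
There is a genuine gap, and it sits at the very end of your argument: you propose to conclude by ``translating the resulting periodicity back into periodicity of a long factor of $\bs$, contradicting that $\bs$ is Sturmian, hence aperiodic.'' But aperiodicity of the infinite word $\bs$ does not forbid long periodic \emph{factors}: every Sturmian word contains the squares $M_k M_k$ of length $2q_k \to \infty$ (the paper itself uses exactly this fact in the proof of Theorem~\ref{twobasesdepbis}), and if the partial quotients are unbounded it even contains $W^t$ with $t$ arbitrarily large. So no choice of $n_0$ turns ``a factor of $\bs$ of length $> n_0$ is periodic'' into a contradiction. The paper's proof aims the periodicity supplied by Lemma~\ref{periodic} at a different target: in each configuration it shows that $\phi(10^k)$ and $\phi(0^k10)$ are periodic, then uses a Fine--Wilf-type statement (Lemma 3(v) of \cite{CK}) to force their periods to coincide and to divide both $|\phi(0)|$ and $|\phi(1)|$, whence $\phi(0)$ and $\phi(1)$ are powers of a common word and $\phi(01)=\phi(10)$ --- contradicting the hypothesis. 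That is the contradiction you need, and your write-up never reaches it; the injectivity claim $n_1$ you introduce to ``pull back'' the period does not repair this, since the pulled-back periodicity is simply not impossible.

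Two further points are glossed over. First, the ``in phase'' case ($V_1=\phi(b_1)$) is not an easy induction when $|\phi(0)|\ne|\phi(1)|$: the parsings can drift and realign, and the paper needs a genuine case analysis (reducing to prefixes $00$ versus $10$, using that $11$ is not a factor, and again landing on $\phi(0),\phi(1)$ being powers of a common word). Second, to set up Lemma~\ref{periodic} with the length constraints $|U_1|\ne|V|$ and $0<|V|<|U|$ in the out-of-phase case, the paper does not work with an arbitrary long factor but isolates the specific pattern $10^k10^{k+1}10$ (guaranteed in every sufficiently long factor by balancedness) and splits into three explicit cases according to where the shifted occurrence of $\phi(10^k10)$ or $\phi(10^{k+1}1)$ sits; your proposal acknowledges this is ``where the bulk of the work lies'' but supplies none of it. As it stands the argument is a plan whose decisive step points at a false contradiction.
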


\begin{proof}
We may assume that $1$ is the isolated letter in $\bs$, i.e., that 
$11$ is not a factor of $\bs$.  
Since $\bs$ is balanced, there exists a positive integer $k$ 
such that $1 0^t 1$ is a factor of $\bs$ if and only if $t = k$ or $k+1$.

We first consider the case where $V_1 = \phi(b_1)$.   
Suppose that $A \ne B$.     
Then, by deleting the maximal common prefix of $A$ and $B$,     
we may assume that $A$ and $B$ have no common prefix.   
Thus, the prefixes of $A$ and $B$ are $00$ and $10$.   

If $\phi(00) = \phi(10) V_2$, then $\phi(0) = \phi(1) V_2 = V_2 \phi(1)$ and there exist 
a word $U$ and positive integers $s, t$ such that 
$\phi(1) = U^s$ and $\phi(0) = U^t$. This gives a contradiction to $\phi(01) = \phi(10)$.

If $\phi(10) = \phi(0^h) V_2$ for some integer $h \ge 2$ and a nonempty prefix 
$V_2$ of $\phi(0)$, then, writing $\phi(0) = V_2 V'$, we get $\phi(0) = V_2 V' = V' V_2$, 
thus there exist a word $U$ and positive integers $s, t$ such that 
$\phi(1) = U^s$ and $\phi(0) = U^t$. This gives a contradiction to $\phi(01) = \phi(10)$.

If $\phi(10) = \phi(0^h) V_2$ for some integer $h \ge 2$ and a nonempty prefix $V_2$ 
of $\phi(1)$, then there exists a positive integer $\ell$ and a prefix $V'$ of $\phi(0)$ such that 
$\phi(1) = \phi(0)^\ell V'$. 
Write $\phi(0) = V' V''$. Then, $\phi(10) = \phi(0)^\ell V' \phi(0) = \phi(0)^{\ell+1} V'$
and we get $V' \phi(0) = \phi(0) V'$. 
Thus, there exist a word $U$ and positive integers $s, t$ such that 
$\phi(1) = U^s$ and $\phi(0) = U^t$. This gives a contradiction to $\phi(01) = \phi(10)$.

Similarly, we show that, if $V_2 = \phi(b_m)$, then $A = B$. 

It only remains for us to treat the case where $V_1 \ne \phi(b_1)$ and $V_2 \ne \phi(b_m)$.   
There exists an integer $n_0$ such that any factor $A$ of $\bs$     
of length greater than $n_0$ contains $10^k10^{k+1}10$.    
It is sufficient to consider the case where   
$\phi(10^k10^{k+1}10) = V_1 \phi(b_2 b_3 \dots b_{m-1}) V_2$, 
for a factor $b_1 b_2 \dots b_m$ of $\bs$  and 
with $V_1$ a proper nonempty suffix of $\phi(b_1)$      
and $V_2$ a proper nonempty prefix of $\phi(b_m)$.      

If $b_2 b_3 \dots b_{m-1} = 0^{k+1}10^{k}1$, then $b_1 = 1$ and $b_m = 0$.    
Thus $|V_1| < |\phi(1)|$ and $|V_2| < |\phi(0)|$,     
which contradicts    
$$
|V_1| + |V_2| <   |\phi(1)| + |\phi(0)| = |\phi(10^k10^{k+1}10)| - |\phi(0^{k+1}10^{k}1)|.    
$$  
Therefore,  since any subword of $\bs$  
in which $10^k10$ and $10^{k+1}1$ do not occur 
is a factor of $0^{k+1}10^{k}1$,  we deduce that 
if $\phi(10^k10^{k+1}10) = V_1 \phi(b_2 \dots b_{m-1}) V_2$ as above,    
then $b_2 \dots b_{m-1}$ contains $10^k10$ or $10^{k+1}1$.

We distinguish three cases:

Case (i) : $\phi(10^k10^{k+1}10) = W_1 \phi(10^k10) W_2$, 
where $0 < |W_1| < |\phi(10^{k})|$. 
\\ \noindent
Then 
$$
\phi(10^k10^{k}) = W_1 \phi(10^k) W'_2, 
\qquad  \phi(0^k1 0 0^{k}10) = W'_1 \phi(0^k10) W_2, 
$$
where $|W'_2| = |W_2| - |\phi(0)|$ and $|W'_1| = |W_1|$.

Case (ii) : $\phi(10^k10^{k+1}10) = W_1 \phi(10^k10) W_2$, 
where $|\phi(10^{k})| < |W_1| < |\phi(10^{k+1})|$. 
\\ \noindent
Then 
$$
\phi(10^k10^{k}) = W'_1 \phi(0^k1) W'_2,  
\qquad  \phi(0^k10 0^{k}10) = W''_1 \phi(0^k10) W_2,   
$$
where $|W'_1| = |W_1| - |\phi(0^{k})|$, $|W'_2| = |W_2| + |\phi(0^{k-1})|$ and $|W''_1| = |W_1|$.

Case (iii) : $\phi(10^k10^{k+1}10) = W_1 \phi(10^{k+1}1) W_2$, 
where $0 < |W_1| < |\phi(10^{k+1})|$. 
\\ \noindent
Then 
$$
\phi(10^k10^{k}) = W_1 \phi(10^k) W'_2,  
\qquad  \phi(0^k1 0 0^{k}10) = W'_1 \phi(0^{k+1}1) W_2, 
$$
where $|W'_2| = |W_2| - |\phi(0)|$ and $|W'_1| = |W_1|$.

By Lemma~\ref{periodic}, in each Case (i), (ii), (iii), 
the factors  $\phi(10^k)$ and $\phi(0^k10)$ are periodic. 
Denoting by $\lambda_1, \lambda_2$ the periods of $\phi(10^k)$, $\phi(0^k10)$, we get  
$$
\lambda_1 \le \frac{|\phi(10^k)|}{2} = \frac{k |\phi(0)| + |\phi(1)|}{2},   
\quad \lambda_2 \le \frac{|\phi(0^k10)|}{2} = \frac{(k+1) |\phi(0)| + |\phi(1)|}{2}.   
$$
Write $\phi(10^k) = U^t$ for a word $U$ with $|U| = \lambda_1$ and integer $t \ge 2$. 
Then $\phi(1) = U^{t_1} U_1$,  $\phi(0^k) = U_2 U^{t_2}$ 
for some words $U_1, U_2$ with $U= U_1 U_2$   
 and some nonnegative integers $t_1, t_2$ satisfying $t_1 + t_2 = t -1$.    
Thus, we get 
$$
\phi(0^k1) = U_2 (U_1 U_2)^{t_2} (U_1 U_2)^{t_1} U_1 = (U_2 U_1)^t,   
\qquad |U_2U_1| = \lambda_1. 
$$   
Since $\phi(0)$ is a prefix of $(U_2 U_1)^t$,  
we deduce that 
$\phi(0^k10) = (U_2 U_1) \cdots (U_2 U_1) U'$ for a prefix $U'$ of $U_2 U_1$,  
It then follows from 
\cite[Lemma 3 (v)]{CK}  that $\lambda_1 = \lambda_2$ or 
$$
|\phi(0^k10)|< \lambda_1 + \lambda_2 \le (k+\frac 12) |\phi(0)| + |\phi(1)| < |\phi(0^k10)|,     
$$ 
in which case we have a contradiction.
If  $\lambda_1 = \lambda_2$, then $\lambda_1$ divides $|\phi(0^k10)|$ and $|\phi(10^k)|$, 
thus $\lambda_1$ divides $|\phi(0)|$ and $|\phi(1)|$.   
This implies that  $\phi(01) = \phi(10) = U U \cdots U$, giving again a contradiction.   
\end{proof}

We end this section with an easy result on the convergents of irrational numbers.

\begin{lemma}\label{lemnew}
Let $(\frac{p_k}{q_k})_{k \ge 0}$ be the sequence of convergents of an irrational number 
$[0; a_1, a_2, \ldots]$
in $(0, 1)$ and $d \ge 2$ be an integer.
Let $c_1$, $c_2$ be integers not both multiple of $d$.  
Then, for any positive integer $k$, we have
$c_1 p_k + c_2 q_k \not\equiv 0 \pmod d$ 
or $c_1 p_{k+1} + c_2 q_{k+1} \not\equiv 0 \pmod d$.
\end{lemma}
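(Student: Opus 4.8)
The plan is to reduce everything to the single classical fact that two consecutive convergents satisfy $p_k q_{k+1} - p_{k+1} q_k = \pm 1$, so that the integer matrix $\left(\begin{smallmatrix} p_k & p_{k+1} \\ q_k & q_{k+1}\end{smallmatrix}\right)$ has determinant $\pm 1$ and is therefore invertible modulo every integer $d \ge 2$. I would argue by contradiction: suppose that for some $k \ge 1$ we have simultaneously $c_1 p_k + c_2 q_k \equiv 0 \pmod d$ and $c_1 p_{k+1} + c_2 q_{k+1} \equiv 0 \pmod d$.

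From these two congruences I would eliminate $c_2$ and then $c_1$ in turn. Multiplying the first congruence by $q_{k+1}$, the second by $q_k$, and subtracting yields $c_1 (p_k q_{k+1} - p_{k+1} q_k) \equiv 0 \pmod d$, hence $d \mid c_1$ because the bracketed quantity is a unit modulo $d$. Symmetrically, multiplying the first congruence by $p_{k+1}$, the second by $p_k$, and subtracting gives $c_2 (p_{k+1} q_k - p_k q_{k+1}) \equiv 0 \pmod d$, hence $d \mid c_2$. This contradicts the hypothesis that $c_1$ and $c_2$ are not both divisible by $d$, and the lemma follows.

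There is essentially no obstacle here: the only input beyond elementary manipulation of two linear congruences is the determinant identity for convergents, which is entirely standard and whose exact sign is irrelevant since only the fact that $p_k q_{k+1} - p_{k+1} q_k$ is a unit modulo $d$ is used. One could phrase the same argument more structurally by observing that the row vector $(c_1, c_2)$ lies in the kernel over $\mathbb{Z}/d\mathbb{Z}$ of an invertible $2 \times 2$ matrix and must therefore vanish, but the explicit elimination above is the shortest route.
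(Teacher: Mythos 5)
Your proof is correct and rests on the same key fact as the paper's: the matrix $\left(\begin{smallmatrix} p_k & p_{k+1} \\ q_k & q_{k+1}\end{smallmatrix}\right)$ is unimodular, hence invertible modulo $d$, so $(c_1,c_2)$ lying in its kernel mod $d$ forces $d\mid c_1$ and $d\mid c_2$. The paper obtains the inverse explicitly by factoring the matrix as a product of the elementary matrices $\left(\begin{smallmatrix} 0 & 1 \\ 1 & a_i\end{smallmatrix}\right)$, whereas you invoke the determinant identity and eliminate by hand; this is a cosmetic difference only.
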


\begin{proof}
Since 
$$ 
\begin{bmatrix} p_{k} & p_{k+1} \\ q_{k} & q_{k+1} \end{bmatrix} 
= \begin{bmatrix} 0 & 1 
\\ 1 & a_{1} \end{bmatrix} \begin{bmatrix} 0 & 1
 \\ 1 & a_{2} \end{bmatrix} \cdots \begin{bmatrix} 0 & 1
  \\ 1 & a_{k+1} \end{bmatrix},
  $$
we have 
$$ 
\begin{bmatrix} c_1 p_k + c_2 q_k & c_1 p_{k+1} + c_2 q_{k+1} \end{bmatrix} = 
\begin{bmatrix} c_1 & c_2 \end{bmatrix}  \begin{bmatrix} 0 & 1 
\\ 1 & a_{1} \end{bmatrix} \begin{bmatrix} 0 & 1 \\ 
1 & a_{2} \end{bmatrix} \cdots  \begin{bmatrix} 0 & 1 \\ 1 & a_{k+1} \end{bmatrix},
$$
thus 
$$ 
\begin{bmatrix} c_1 & c_2 \end{bmatrix}  =  
\begin{bmatrix} c_1 p_k + c_2 q_k & c_1 p_{k+1} + c_2 q_{k+1} \end{bmatrix}
\begin{bmatrix} -a_{k+1} & 1 \\ 1 & 0 \end{bmatrix}\cdots \begin{bmatrix} -a_{2} & 1 \\ 
1 & 0 \end{bmatrix} 
\begin{bmatrix} -a_{1} & 1 \\ 1 &  0 \end{bmatrix}.
$$
Hence, if $\begin{bmatrix} c_1 p_k + c_2 q_k & c_1 p_{k+1} + c_2 q_{k+1} \end{bmatrix} 
= \begin{bmatrix} 0 & 0 \end{bmatrix}$ modulo $d$,
then $c_1$ and $c_2$ are multiple of $d$. 
\end{proof}


\section{Proofs of Theorems~\ref{twobasesdepter} and~\ref{twobasesdepbis}}

We begin with the proof of Theorem~\ref{twobasesdepbis}.   

\begin{proof}[Proof of Theorem~\ref{twobasesdepbis}]
Let $b \ge 2$ be an integer and $\rho, \sigma$ be positive integers.
Assume that $\rho = d \sigma$ for some integer $d \ge 2$. 
Let $\xi$ be a real number and assume that there are integers 
$a_1, a_2, \ldots $ in $\{0, 1, \ldots , b^\rho-1\}$ and $k$, $n_0$ such that
$$
\xi = \lfloor \xi \rfloor + \sum_{i\ge 1} \frac{a_i}{b^{\rho i}} \ \text{ and } \ p(n, \xi, b^\rho) =  n + k 
\text{  for } n \ge n_0.
$$
Then, by Lemma~\ref{Cas}, there are a finite word $W$, a Sturmian word $\bs$ 
defined over $\{0,1\}$ and a morphism $\phi$ from $\{ 0, 1\}^*$ into $\{ 0, 1, \dots, b^\rho-1 \}^*$ 
such that $\phi(01) \ne \phi(10)$ and 
$$
{\mathbf a} = a_1 a_2 \ldots = W \phi( \bs).
$$
Let $a$ be in $\{0, 1, \ldots , b^\rho -1\}$ and consider its representation in 
base $b^\sigma$ given by 
$a = c_1 b^{(d-1)\sigma} + c_2 b^{(d-2)\sigma} + \ldots + c_d b^{0\cdot \sigma}$, where 
$c_1, \ldots , c_d$ are in $\{0, 1, \ldots , b^\sigma -1\}$. 
Define the function $\phi_{\rho,\sigma}$ on $\{0, 1, \ldots , b^\rho -1\}$ 
by setting $\phi_{\rho,\sigma} (a) = c_1 c_2 \ldots c_d$.  
It extends to a morphism 
from $\{0, 1, \ldots , b^\rho -1\}^*$ to $\{0, 1, \ldots , b^\sigma -1\}^*$, 
which we also denote by $\phi_{\rho,\sigma}$. 
Then, we have
$$
\xi = \lfloor \xi \rfloor + \sum_{i\ge 1} \frac{d_i}{b^{\sigma i}},  \ \text{ where } \ {\mathbf d}
 = d_1 d_2 \ldots = \phi_{\rho, \sigma}(W) \, (\phi_{\rho,\sigma} \circ \phi) ( \bs). 
$$
We deduce from Lemma \ref{Cas} that the $b^\sigma$-ary expansion of $\xi$ is quasi-Sturmian.  
Thus we have established the first assertion of the theorem.

For the second assertion of the theorem, we may assume that $\rho$ and $\sigma$ 
are relatively prime (otherwise, we replace $b$ by $b^g$  
where $g$ is the greatest common divisor of $\rho$ and $\sigma$).

Let $\xi$ be a real number and write
$$
\xi = \lfloor \xi \rfloor + \sum_{i\ge 1} \frac{a_i}{b^{\rho i}} 
=  \lfloor \xi \rfloor + \sum_{j \ge 1} \frac{b_j}{b^{\sigma j}}, 
$$
where $a_1, a_2, \ldots $ are in $\{0, 1, \ldots , b^\rho-1\}$ and $b_1, b_2, \ldots $ 
are in $\{0, 1, \ldots , b^\sigma-1\}$.
Assume that ${\mathbf a} = a_1 a_2 \ldots$  and ${\mathbf b} = b_1 b_2 \ldots$ 
are both quasi-Sturmian.
By Lemma~\ref{Cas}, there are a finite word $W$, a Sturmian word $\bs$ 
defined over $\{0,1\}$ and a morphism $\phi$ from $\{ 0, 1\}^*$ into $\{ 0, 1, \dots, b^\rho-1 \}^*$ 
such that $\phi(01) \ne \phi(10)$ and 
$$
{\mathbf a} = a_1 a_2 \ldots = W \phi( \bs).
$$

We claim that $|\phi(0)| =: l_0 $ and $|\phi(1)|=: l_1$ are both multiple of $\sigma$. 

In order to deduce a contradiction, we suppose that $\sigma$ 
does not divide at least one of $l_0$ and $l_1$.

Let $\phi_{\rho,1}$ be the morphism $\phi_{\rho,\sigma}$ defined above in the case $\sigma = 1$. 
For each factor $U$ of $\bs$, let 
$$
\Lambda (U) := \{  0 \le j \le \sigma -1 :   
\phi_{\rho,1} ({\mathbf a}) 
= V \phi_{\rho,1} \circ \phi (U) \text{ for some $V$ with } |V| \equiv j \pmod \sigma \}
$$
denote the nonempty set of positions modulo $\sigma$ where $\phi_{\rho,1} \circ \phi(U)$ 
occurs in $\phi_{\rho,1} (\mathbf a)$.
If $U'$ is a prefix of $U$, then $\Lambda(U)$ is a subset of $\Lambda(U')$.
Consequently, there exists $N$ 
such that $\Lambda( s_1 \dots s_n) = \Lambda( s_1 \dots s_N) $ for each $n \ge N$.

Let $[0; a_1, a_2, \ldots ]$ denote the continued fraction expansion of the 
slope of $\bs$ and, for $k \ge 1$, let $q_k$ be the denominator of the convergent
$[0; a_1, \ldots , a_k]$ to this slope. 
Define the sequence $(M_k)_{k \ge 0}$ of finite words over $\{0, 1\}$ by  
$$
M_0 = 0, \quad M_1 = 0^{a_1 -1} 1, \quad  
\hbox{and} \quad M_{k+1} = (M_k)^{a_k} M_{k-1}, \quad (k \ge 1).
$$
For $k \ge 1$, the word $M_k$ is a factor of length $q_k$ of $\bs$ (see e.g. \cite{Loth02}).
Since there are $p_k$ occurrences of the digit $1$ in $M_k$, we get    
$$
|\phi(M_k)| = l_0 (q_k - p_k) + l_1 p_k = (l_1 - l_0) p_k + l_0 q_k.
$$
By Lemma~\ref{lemnew} and the assumption that $\sigma$ 
does not divide at least one of $l_0$ and $l_1$, 
we conclude that at least one of $|\phi(M_k)|$ and $|\phi(M_{k+1})|$ is not a multiple of $\sigma$.

Let $U$ be a factor of $\bs$. 
Then $U$ is a factor of $M_k$ for some integer $k$.  
Since $M_k M_k$ is a factor of 
$M_{k+2}M_{k+1} = (M_{k+1})^{a_{k+2}} M_k (M_k)^{a_{k+1}} M_{k-1}$, 
which is a factor of $\bs$, there are two positions of $\phi(U)$
which differ by $|\phi(M_k)|$. 
Thus, there exist two occurrences of 
$\phi(U)$ in $\phi(\bs)$ separated by exactly $\rho |\phi(M_k)|$ letters.
Replacing $k$ by $k+1$ is necessary,
we can assume that $\rho  |\phi(M_k)|$ is not a multiple of $\sigma$
and we deduce that $|\Lambda(U)| \ge 2$ for any factor $U$ of $\bs$.

A finite word $U$ is called right special if $U$ is a prefix of two different factors   
of $\bs$ of the same length. 
If the initial word $s_1 \dots s_n$ of $\bs$ is not a prefix of a right special word, then either   
$s_{j+1} \dots s_{j+n} \ne s_1 \dots s_n$ for all $j \ge 1$, or $\bs$ is periodic. 
Since a Sturmian word is recurrent and not periodic (see, e.g., \cite[page 158]{Fogg02}),  
there are infinitely many prefixes $s_1 \dots s_n$ of $\bs$ which are right special. 
Let $n \ge N$ be such that $s_1 \dots s_n$ is right special. 
Then, there exists a letter $c$ such that  $c \ne s_{n+1}$ and $s_1 \dots s_n c$ is a factor of $\bs$.
Thus, we get  
$$
\Lambda(s_1 \dots s_n s_{n+1}) = \Lambda(s_1 \dots s_n ) \supset \Lambda(s_1 \dots s_n c ). 
$$ 
Choose $i, j$ in $\Lambda(s_1 \dots s_n c ) $ with $0 \le i < j \le \sigma-1$. 
Then we can write 
$$
\phi_{\rho,1}({\mathbf a}) = U U_1 \phi_{\rho,1} \circ \phi (s_1 \dots s_n c ) U'_1 \ldots
= U' U_2 \phi_{\rho,1} \circ \phi (s_1 \dots s_n s_{n+1})  U'_2 \ldots 
$$ 
and 
$$
\phi_{\rho,1}({\mathbf a}) = V V_1 \phi_{\rho,1} \circ \phi(s_1 \dots s_n c ) V'_1 \ldots 
= V' V_2 \phi_{\rho,1} \circ \phi(s_1 \dots s_n s_{n+1}) V'_2 \ldots, 
$$
for some words $U, U', V, V', U_1, U_2, V_1, V_2, U'_1, U'_2, V'_1, V'_2$ 
written over $\{0, \ldots , b-1\}$ and satisfying
$$
|U_1| = |U_2| = i, \ |V_1| = |V_2| =j, \ |U| \equiv |U'|\equiv |V|\equiv |V'| \equiv 0 \pmod \sigma, 
$$
$$
0 \le |U'_1| =  |U'_2| \le \sigma-1, \quad 0 \le  |V'_1| = |V'_2| \le \sigma-1, 
$$
and $\sigma$ divides $i+(n+1)\rho + |U'_1|$ and $j + (n+1)\rho + |V'_1|$.
Thus, there exist $u_1, u_2, v_1, v_2$ in $\{0, 1, \ldots , b^\sigma -1\}$ 
and words $X, Y, A_1, A_2, B_1, B_2$ written over $\{0, 1, \ldots , b^\sigma -1\}$  
with 
$$
|X| =\Bigl\lfloor \frac{i+n\rho}{\sigma} \Bigr\rfloor -1, \quad |Y| 
= \Bigl\lfloor \frac{j + n \rho}{\sigma} \Bigr\rfloor -1
$$ 
and 
$$
A_1 \ne A_2,  \quad B_1 \ne B_2,  \quad |A_1| = |A_2| < \frac \rho \sigma +2,  
\quad |B_1| = |B_2| < \frac{\rho}{\sigma} +2,
$$
such that 
\begin{align*}
U_1 \phi_{\rho,1} \circ \phi (s_1 \dots s_n c ) U'_1 &=  \phi_{\sigma,1}(u_1 X A_1), \\
U_2 \phi_{\rho,1}\circ \phi(s_1 \dots s_n s_{n+1}) U'_2 &=  \phi_{\sigma,1}(u_2 X A_2), \\
V_1 \phi_{\rho,1} \circ\phi(s_1 \dots s_n c ) V'_1 &=   \phi_{\sigma,1}(v_1 Y B_1), \\
V_2 \phi_{\rho,1}\circ \phi (s_1 \dots s_n s_{n+1}) V'_2 &=  \phi_{\sigma,1}(v_2 Y B_2).    
\end{align*} 
Here, $\phi_{\sigma,1}$ is defined analogously as $\phi_{\rho,1}$.
Therefore, $u_1 X A_1$, $u_2 X A_2$ and $v_1 Y B_1$, $v_2 Y B_2$ 
are all factors of $\phi_{\sigma,1}^{-1} ( \phi_{\rho,1}( \phi(\bs)))$. 
Denoting by $A$ (resp., by $B$) 
the longest common prefix (it could be the empty word) of $A_1$ and $A_2$ 
(resp., of $B_1$ and $B_2$), we deduce that $XA$ and $YB$ are both right special. 

Let $W_0$ be the longest common prefix of $\phi_{\rho,1} \circ \phi(s_1 \dots s_n s_{n+1})$ 
and $\phi_{\rho,1} \circ \phi(s_1 \dots s_n c)$.   
Then, there exist finite words $W_1, W_2, W'_1, W'_2$ over $\{0, \ldots , b-1\}$
satisfying $|W_1| = \sigma-i$, $|W_2| = \sigma-j$, $|W'_1| < \sigma$, $|W'_2| <\sigma$, and  
\begin{equation*}
W_0 = W_1 \phi_{\sigma,1}(XA) W'_1 = W_2\phi_{\sigma,1}(YB) W'_2,
\end{equation*}   
Thus, we get $|XA| \le |YB| \le |XA| +1$. 

Suppose that $XA$ is a suffix of $YB$.   
Then, there exists a nonempty finite word $W'$ of length less than $\sigma$ such that  
\begin{align*}
W_0 &= W_2 W' \phi_{\sigma,1}(XA) W'_1 = W_2 \phi_{\sigma,1}(XA) W'_2, &\text{ if } |XA| = |YB|, \\  
W_0 &= W_1 \phi_{\sigma,1}(XA) W'_1 = W_1 W' \phi_{\sigma,1}(XA) W'_2, &\text{ if } |XA| +1 = |YB|.   
\end{align*}
It then follows from Theorem 1.5.2 of \cite{AlSh03} that we have  
$W_0 = W_2 (W')^t W'' W'_1$ or $W_1 (W')^t W'' W'_2$, respectively, 
for some integer $t$ and a prefix $W''$ of $W'$.  
Since $\rho, \sigma$ are fixed and $\bs$ is Sturmian, 
we deduce from Lemma 2.3 of \cite{BuKim15c} that $(W')^t$    
cannot be a factor of $\phi_{\rho,1} \circ \phi (s_1 \dots s_n )$ when $n$ is sufficiently large. 
This shows that the lengths of $XA$ and $YB$ are bounded independently of $n$. 

Consequently, the right special words $XA$ and $YB$ are not suffixes of each others if $n$
is sufficiently large. 
Hence, there are arbitrarily large integers $m$ such that 
$\phi_{\sigma,1}^{-1} \circ \phi_{\rho,1} \circ \phi (\bs)$ 
has two distinct right special words of length $m$.  
This implies that ${\mathbf b} = \phi_{\sigma,1}^{-1} \circ \phi_{\rho,1} ({\mathbf a})$ 
is not quasi-Sturmian, which gives a contradiction.
Therefore, we have established that $|\phi(0)|$ and $|\phi(1)|$ are both multiple of $\sigma$.  

Write
$$ 
\xi = \lfloor \xi \rfloor + \sum_{i\ge 1} \frac{c_i}{b^{\rho \sigma i}}, 
\qquad {\mathbf c} = c_1 c_2 \ldots 
= \phi_{\rho \sigma, \rho}^{-1} ({\mathbf a}) = \phi_{\rho \sigma, \rho}^{-1} (W \phi(\bs)) .
$$   
Put $|W| = h \sigma + d $ for integers $h \ge 0$ and $d$ with $0 \le d < \sigma$.
Let $\phi(0) = X_1 X_2$, $\phi(1) = Y_1 Y_2$, where $|X_1| = |Y_1| = \sigma - d$.   
Assume that $11$ is not a factor of $\bs$.   
Then there exists a positive integer $k$ such that $10^m1$ is a factor of $\bs$ 
if and only if $m = k$ or $k+1$.  
Thus, we can represent $\bs$ as    
$$ 
\bs = 0^{w} t_0 t_1 t_2 t_3 \ldots , \qquad t_0 = 10^k, \ t_i \in \{ 10^k, 0\}, \ 0 \le w \le k+1. 
$$    
It is not difficult to check that $\mathbf t := t_0 t_1 t_2 \ldots$ is Sturmian.
Define  $\phi'$ by
$$
\phi' (10^k) = X_2 Y_1 Y_2 (X_1 X_2)^{k-1} X_1, \quad \phi'(0) = X_2 X_1.
$$   
Then we get 
$$
\phi(\bs) = (X_1 X_2)^{w} Y_1 Y_2 (X_1 X_2)^{k-1} X_1 \phi'(t_1t_2 t_3 \dots),     
$$
thus  
$$
{\mathbf c}= \phi_{\rho \sigma, \rho}^{-1}(W \phi(\bs))
=\phi_{\rho \sigma, \rho}^{-1}(W (X_1 X_2)^{w}Y_1 Y_2 (X_1 X_2)^{k-1} X_1)
(\phi_{\rho \sigma, \rho}^{-1} \circ \phi') (t_1t_2 t_3 \dots).
$$
Since $|\phi(0)|$ and $|\phi(1)|$ are both multiple of $\sigma$, the
morphism $\phi_{\rho \sigma, \rho}^{-1} \circ \phi'$ is well-defined. 
We conclude that $\mathbf c$ is quasi-Sturmian and the proof of the theorem is complete. 
\end{proof}

\begin{lemma}\label{lem4.2}
Let $b \ge 2$, $d \ge 2$, $\rho$, $\sigma$ be positive integers with $\rho = d \sigma$.
Let $x_1 x_2 \ldots $ be a quasi-Sturmian word over $\{ 0, 1, \ldots, b^\rho -1 \}$. 
Then, there exists an integer $n_0$ such that the real number 
$\xi = \sum_{k \ge 1} \frac{x_k}{b^{\rho k}}$ satisfies 
$$
p(nd, \xi , b^\sigma ) \ge (n +1)d,  \quad \text{ for $n\ge n_0$}.    
$$ 
Furthermore, if $s_1 s_2 \ldots $ is a Sturmian word written over $\{ 0, 1 \}$,  then
there exists an integer $n_0$ such that the real number 
$\xi = \sum_{k \ge 1} \frac{s_k}{b^{\rho k}}$ satisfies
$$
p(n, \xi , b^\sigma )  = n + d,  \quad \text{ for $n\ge n_0$}.    
$$ 
\end{lemma}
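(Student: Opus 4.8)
Set $\phi:=\phi_{\rho,\sigma}$; this is a morphism of constant length $d=\rho/\sigma$, injective on letters, with $\phi(0)=0^{d}$ and $\phi(1)=0^{d-1}1$. Since $b^{\rho}=(b^{\sigma})^{d}$, the $b^{\sigma}$-ary expansion of $\xi=\sum_{k\ge1}x_k b^{-\rho k}$ is $\mathbf y:=\phi(\mathbf x)$ (this really is the canonical expansion, since a (quasi-)Sturmian word is not eventually constant). Writing $\mathbf x=W\phi_0(\bs)$ with $\bs$ Sturmian and $\phi_0(01)\neq\phi_0(10)$ (Lemma~\ref{Cas}), we have $\mathbf y=\phi(W)\,\psi(\bs)$ with $\psi:=\phi\circ\phi_0$ and $\psi(01)\neq\psi(10)$, so $\mathbf y$ is quasi-Sturmian by Lemma~\ref{Cas}, say $p(n,\xi,b^{\sigma})=n+k_{\mathbf y}$ for $n\ge n_0$. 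Both assertions of the lemma will follow once $k_{\mathbf y}$ is pinned down, and for that it suffices to evaluate $p(dN,\mathbf y)$ for one large integer $N$.

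The plan is to classify a factor $F$ of $\mathbf y$ by its \emph{phase} $i\in\{0,\dots,d-1\}$, the residue modulo $d$ of the positions at which $F$ occurs. Extending $F$ on both sides to the nearest $\phi$-block boundaries and applying Lemma~\ref{qsrep} to the resulting word (of the form $\phi(A)$ for a long factor $A$), one shows that for $N$ large every length-$dN$ factor has a well-defined phase $i$ and equals $s_i(\phi(a))\,\phi(w)\,p_i(\phi(c))$, where $s_i(v)$, $p_i(v)$ are the suffix of $v$ of length $|v|-i$ and the prefix of $v$ of length $i$, and $awc$ is the factor of $\mathbf x$ covering $F$; note $w$ (hence $\phi(w)$) is determined by $F$. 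Thus $p(dN,\mathbf y)=\sum_{i=0}^{d-1}N_i$ with $N_0=p(N,\mathbf x)$, and, for $1\le i\le d-1$, $N_i$ is the number of triples $\bigl(s_i(\phi(a)),w,p_i(\phi(c))\bigr)$ with $awc$ a factor of $\mathbf x$ of length $N+1$. Because a quasi-Sturmian word has, for $N$ large, a unique right-special and a unique left-special factor of length $N-1$ (each with exactly two one-sided extensions), the only coincidences among these triples come from $p_i(\phi(c))=p_i(\phi(c'))$ at the two right-extensions $c,c'$ of the right-special factor, and from $s_i(\phi(a))=s_i(\phi(a'))$ at the two left-extensions of the left-special factor; choosing $N$ with these two special factors distinct, $N_i=p(N+1,\mathbf x)-\varepsilon_i$ for an explicit $\varepsilon_i\in\{0,1,2\}$.

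For the second assertion, $\mathbf x=\bs$ is Sturmian, the extensions above are $0$ and $1$, and from $\phi(0)=0^{d}$, $\phi(1)=0^{d-1}1$ one reads off $p_i(\phi(0))=p_i(\phi(1))=0^{i}$ and $s_i(\phi(0))=0^{d-i}\neq 0^{d-i-1}1=s_i(\phi(1))$ for every $1\le i\le d-1$. Hence $\varepsilon_i=1$ for each such $i$, so $N_i=p(N+1,\bs)-1=N+1$ and $N_0=p(N,\bs)=N+1$; summing, $p(dN,\mathbf y)=d(N+1)=dN+d$, i.e.\ $k_{\mathbf y}=d$, which gives $p(n,\xi,b^{\sigma})=n+d$ for $n\ge n_0$.

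For the first assertion, running the same count with $\mathbf x$ quasi-Sturmian — equivalently, applying Cassaigne's complexity formula $p\bigl(n,V\psi(\bs)\bigr)=n+|\psi(0)|+|\psi(1)|-1-P-Q$ to $\mathbf y=\phi(W)\psi(\bs)$, where $P,Q$ are the lengths of the longest common prefix and suffix of $\psi(0),\psi(1)$, together with the same formula for $k_{\mathbf x}$ — gives $k_{\mathbf y}=d\,k_{\mathbf x}+(d-1)-p^{*}-q^{*}$, where $0\le p^{*},q^{*}\le d-1$ count the common leading, resp.\ trailing, base-$b^{\sigma}$ digits of the two letters at which $\phi_0(0)$ and $\phi_0(1)$ first differ (with $p^{*}=0$, resp.\ $q^{*}=0$, when one of $\phi_0(0),\phi_0(1)$ is a prefix, resp.\ suffix, of the other). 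If $k_{\mathbf x}\ge2$ then $k_{\mathbf y}\ge 2d+(d-1)-2(d-1)=d+1$. If $k_{\mathbf x}=1$, then the common prefix and suffix of $\phi_0(0),\phi_0(1)$ have total length $|\phi_0(0)|+|\phi_0(1)|-2$; were neither word a prefix or a suffix of the other, both lengths would be $\le\min(|\phi_0(0)|,|\phi_0(1)|)-1$, which forces $|\phi_0(0)|=|\phi_0(1)|$ and then $\phi_0(0)=\phi_0(1)$, contradicting $\phi_0(01)\neq\phi_0(10)$; hence $p^{*}=0$ or $q^{*}=0$, so $p^{*}+q^{*}\le d-1$ and $k_{\mathbf y}\ge 2d-1-(d-1)=d$. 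In all cases $k_{\mathbf y}\ge d$, so $p(nd,\xi,b^{\sigma})=nd+k_{\mathbf y}\ge(n+1)d$ for $n\ge n_0$. I expect the main obstacle to be the bookkeeping in the second paragraph — turning ``unique phase'' and the enumeration of phase-$i$ factors into rigorous statements via Lemma~\ref{qsrep} and the fine structure (unique left/right-special factors, their valences and extensions) of quasi-Sturmian words — with the degenerate case $k_{\mathbf x}=1$ in the last step being the other point that needs care.
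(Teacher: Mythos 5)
Your reduction (pass to $\phi_{\rho,\sigma}$, get quasi-Sturmianity of $\mathbf y$ from Lemma~\ref{Cas}, use Lemma~\ref{qsrep} for recognizability, and count factors of length $dN$ by their phase relative to the $d$-blocks) is exactly the paper's strategy, and your proof of the second assertion --- the exact count $p(dN,\mathbf y)=d(N+1)$ read off from $\phi_{\rho,\sigma}(0)=0^{d}$, $\phi_{\rho,\sigma}(1)=0^{d-1}1$ --- matches the paper's.

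The first assertion, however, is made to rest entirely on the identity $p\bigl(n,V\psi(\bs)\bigr)=n+|\psi(0)|+|\psi(1)|-1-P-Q$, and that identity is false. Take $\psi(0)=010$, $\psi(1)=01$, i.e.\ $\psi=\varphi^{2}$ for the Fibonacci morphism $\varphi\colon 0\mapsto 01,\ 1\mapsto 0$. Then $\psi(01)=01001\neq 01010=\psi(10)$, $P=2$, $Q=0$, so the formula predicts eventual complexity $n+2$; but $\psi(\bs)$ is again Sturmian (for $\bs$ the Fibonacci word one even has $\psi(\bs)=\bs$), so the true constant is $1$. The formula only discounts coincidences coming from the common prefix and suffix of $\psi(0),\psi(1)$; it misses the further identifications that occur as soon as one of $\psi(0),\psi(1)$ is a prefix or a suffix of the other (here $T=\psi(1)=01$ read as a proper prefix of $\psi(0)$ produces the same word as extending $w$ by the letter $1$ with $T$ empty), and Lemma~\ref{Cas} gives you no control over the inner morphism $\phi_0$, so this degeneracy cannot be excluded. (The prefix $V$ is a second, independent reason the stated equality fails.) Since the formula can thus \emph{overestimate} the true constant, the chain $k_{\mathbf y}=dk_{\mathbf x}+(d-1)-p^{*}-q^{*}\ge d$ is unsound even as a lower bound, and your case split on $k_{\mathbf x}=1$ versus $k_{\mathbf x}\ge 2$ conflates the actual constant with the formula value (for $\phi_0(0)=010$, $\phi_0(1)=01$ they differ). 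The paper avoids this entirely: it never computes $k_{\mathbf y}$, but lower-bounds the number of words $U\phi_{\rho,\sigma}(A)V$ directly, using that $\phi_{\rho,\sigma}$ is injective of constant length $d$ (so two distinct letters cannot agree both in their first $i$ and in their last $d-i$ base-$b^{\sigma}$ digits), with a separate count according to whether $\phi(\bs)$ uses two or at least three letters. Your phase decomposition with the bounds $\varepsilon_i\in\{0,1,2\}$ could be pushed through along those lines, but the appeal to the ``complexity formula'' does not close the gap.
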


\begin{proof}
Set $\mathcal A := \{ 0, 1, \ldots, b^\rho -1 \}$. There exist a Sturmian word $\bs$
written over $\{0, 1\}$, a morphism $\phi$ from $\{0,1\}^*$ into $\mathcal A^*$
satisfying $\phi(01) \not= \phi(10)$, and a factor $W$ of  
$\mathbf x := x_1 x_2 \ldots$ such that ${\mathbf x} = W \phi(\bs)$.   
Then, the word 
$$
{\mathbf y} :=  \phi_{\rho,\sigma} ({\mathbf x}) 
= \phi_{\rho,\sigma}( W \phi(\bs)) =  \phi_{\rho,\sigma}( W) (\phi_{\rho,\sigma} \circ \phi) (\bs) 
$$    
is quasi-Sturmian.    

Let $n$ be a positive integer larger than the integer $n_0$ given by  
Lemma~\ref{qsrep} applied to the morphism $\phi_{\rho,\sigma} \circ \phi$.   
We claim that if $U_1 \phi_{\rho,\sigma} (A_1) V_1 = U_2 \phi_{\rho,\sigma} (A_2) V_2$, 
where $A_1, A_2$ are factors of $\phi(\bs)$ of length $n$ and $U_1, U_2$ (resp., $V_1, V_2$) are
nonempty suffixes (resp., proper prefixes) 
of words of the form $\phi_{\rho,\sigma} (a)$ for $a$ in $\mathcal A$,
then $U_1 = U_2$, $A_1 = A_2$ and $V_1 = V_2$.

Suppose not. Then we may assume that there exist $A_1, A_2$ and $U, V$ such that
$$
\phi_{\rho,\sigma} (A_1) V = U \phi_{\rho,\sigma} (A_2).
$$
Thus there exist $a_1, a_2$ in $\mathcal A$, 
a factor $A$ of $\phi(\bs)$ of length $n$, and a factor 
$A'$ of $\phi(\bs)$ of length $n-1$ such that 
$\phi_{\rho,\sigma} (A) = W_1 \phi_{\rho,\sigma} (A') W_2$, 
where $W_1$ (resp., $W_2$) is a nonempty proper suffix (resp., prefix)
of $\phi_{\rho,\sigma} (a_1)$ (resp., of $\phi_{\rho,\sigma} (a_2)$).  
Consequently, there exist $b, b', c, c'$ in $\{0, 1\}$ and factors  
$B, B'$ of $\bs$ such that  $A = U \phi(B) V$,  $a_1 A' a_2 = U' \phi(B') V'$,     
where $U$ (resp., $U'$) is a nonempty suffix of $\phi(b)$ (resp., $\phi(b')$) and 
$V$ (resp., $V'$) is a nonempty prefix of $\phi(c)$ (resp., $\phi(c')$).  
Then $A' = U'' \phi(B') V''$ for words $U'', V''$ such that $U' = a_1 U''$, $V' = V'' a_2$. 
Therefore, we get    
$$
\phi_{\rho,\sigma} (A) = \phi_{\rho,\sigma} (U) (\phi_{\rho,\sigma} \circ \phi) (B) \phi_{\rho,\sigma} (V)    
= W_1 \phi_{\rho,\sigma} (U'') (\phi_{\rho,\sigma} \circ \phi) (B') \phi_{\rho,\sigma} (V'') W_2.   
$$
We deduce from Lemma~\ref{qsrep} that  
$\phi_{\rho,\sigma} (U) = W_1 \phi_{\rho,\sigma} (U'')$,  
$\phi_{\rho,\sigma} (V)= \phi_{\rho,\sigma} (V'') W_2$   
and $B = B'$.    
This is a contradiction to the fact that  
$W_1$ (resp., $W_2$) is a nonempty proper suffix (resp., prefix) 
of $\phi_{\rho,\sigma} (a_1)$ (resp., of $\phi_{\rho,\sigma} (a_2)$).   
Hence, the representation of $X = U \phi_{\rho,\sigma} (A) V$ is unique.   

If $\phi(\bs)$ is written over an alphabet of three letters or more, then   
$$
p (n-1, \phi (\bs)) \ge (n-1) +2 = n+1,
$$   
which implies that the number of factors $X$ of $(\phi_{\rho,\sigma} \circ \phi) (\bs)$
of length $nd$ is at least equal to $(n+1)d$.    
If $\phi(\bs)$ is written over an alphabet of two letters, say over the alphabet $\mathcal A = \{a,b\}$,   
then we can put $\phi_{\rho,\sigma} (a) = Z X$ and $\phi_{\rho,\sigma} (b) =  Z Y$, 
where $Z$ is the longest common prefix of 
$\phi_{\rho,\sigma} (a), \phi_{\rho,\sigma} (b)$ and the first letters of $X, Y$ are different.    
If $|V| > |Z|$, then for each right special factor $A$ of $\bs$     
there are two distinct factors $\phi_{\rho,\sigma}(A)V_1$, 
$\phi_{\rho,\sigma}(A)V_2$ in $\phi(\bs)$.  
If $|V| \le |Z|$, then $|U| \ge |X| = |Y|$, thus for each left special factor $B$ of $\bs$   
there are two factors $U_1\phi_{\rho,\sigma}(B)$, $U_2\phi_{\rho,\sigma}(B)$ in $\phi(\bs)$. 
For each $c = 0, \ldots , d-1$, the number of factors $X = U \phi_{\rho,\sigma} (A) V$ 
of $(\phi_{\rho,\sigma} \circ \phi) (\bs)$ of length $nd$ with $|A| = n-1$ and
$|U| = d - |V| =  c$ is at least equal to $p (n-1, \phi (\bs))+1$.      
Therefore, we get    
$$
p(nd, \xi , b^\sigma ) \ge p(nd, (\phi_{\rho,\sigma} \circ \phi) (\bs)) \ge (n +1) d.     
$$   
Since the function $m \mapsto p(m, \xi, b^\sigma)$ is strictly increasing,  
this implies the first assertion of the theorem.  

For the second assertion, 
let $\bs = s_1 s_2 \ldots $ be a Sturmian word written 
over the subset $\{0,1\}$ of $\{0, 1, \ldots , b^{\rho} -1\}$ and define 
$$
\xi = \sum_{i \ge 1} \frac{s_i}{b^{\rho i}}.
$$
Since $\phi_{\rho, \sigma} (0) = 0^d$ and $\phi_{\rho,\sigma} (1) = 0^{d-1}1$,
for $n \ge 1$, any factor of length $d n$ of $\phi_{\rho,\sigma} (\bs)$ is a suffix of
$\phi_{\rho,\sigma} (A) 0^k$, where $A$ is a factor of length $n$ in $\bs$ and  $0 \le k \le d-1$.     
Since $0^{d-1}$ is a prefix of $\phi_{\rho,\sigma} (A) 0^k$, 
the number of suffixes of $\phi_{\rho,\sigma} (A) 0^k$ of length $nd$ is $d(n+1)$, thus
$$
p(d n, \xi, b^\sigma ) = d(n+1) = dn + d.  
$$
Since the function $m \mapsto p(m, \xi, b^\sigma)$ is strictly increasing, 
this completes the proof of the theorem. 
\end{proof}

\begin{proof}[Proof of Theorem~\ref{twobasesdepter}]
Suppose that the two bases $r \ge 2$ and $s \ge 2$ are multiplicatively dependent and 
let $m, \ell$ be the coprime positive integers satisfying 
$r^m = s^\ell$. 
Then, there exists a positive integer $b$ such that $r = b^\ell$ and $s = b^m$.  

Let $\bs = s_1 s_2 \ldots $ be a Sturmian word 
over the subset $\{0,1\}$ of $\{0, 1, \ldots , b^{m\ell} -1\}$ and define 
$$
\xi = \sum_{i \ge 1} \frac{s_i}{b^{m\ell i}}.
$$
By the second assertion of Lemma~\ref{lem4.2}, 
there exists an integer $n_0$ such that 
$$
p(n, \xi , b^\ell )  = n + m  \quad \text{ and } \quad
p(n, \xi , b^m )  = n + \ell,  \quad \text{ for $n \ge n_0$}.    
$$ 
Thus,
$$
\lim_{n \to + \infty} \, \bigl( p(n, \xi, r) + p(n, \xi , s) -  2n \bigr) = m + \ell.
$$ 
This proves the first assertion of the theorem. 

For the second assertion of the theorem, 
it is sufficient to consider a real number $\xi$ whose $b^\ell$-ary and $b^m$-ary expansions 
are both quasi-Sturmian.
By Theorem~\ref{twobasesdepbis}, 
the $b^{\ell m}$-ary expansion of $\xi$ is also quasi-Sturmian and 
we deduce from the first assertion of Lemma~\ref{lem4.2} that 
there exists an integer $n_0$ such that 
$$
p(m n, \xi , b^\ell ) \ge m (n+1) \quad \text{ and } \quad
p(\ell n, \xi , b^m ) \ge \ell (n+1),  \quad \text{ for $n \ge n_0$}.     
$$ 
Therefore,
$$
\lim_{n \to + \infty} \, \bigl( p(n, \xi, r) + p(n, \xi , s) -  2n \bigr) \ge m + \ell.
$$ 
This completes the proof of the theorem.  
\end{proof}

\medskip
\section*{Acknowledgement}
Dong Han Kim was supported by the 
National Research Foundation of Korea (NRF-2015R1A2A2A01007090).


\end{document}